\providecommand{\U}[1]{\protect \rule{.1in}{.1in}}
\newtheorem{theorem}{Theorem}[section]
\newtheorem{corollary}[theorem]{Corollary}
\newtheorem{lemma}[theorem]{Lemma}
\newtheorem{proposition}[theorem]{Proposition}
\theoremstyle{remark}
\newtheorem{remark}[theorem]{Remark}
\numberwithin{equation}{section}
\begin{document}
\title[the spectral polynomial of Treibich-Verdier potential]{Real-root property of the spectral polynomial of the Treibich-Verdier
potential and related problems}
\author{Zhijie Chen}
\address{Department of Mathematical Sciences and Yau Mathematical Sciences Center,
Beijing, 100084, China }
\email{zjchen@math.tsinghua.edu.cn}
\author{Ting-Jung Kuo}
\address{Taida Institute for Mathematical Sciences (TIMS), National Taiwan University,
Taipei 10617, Taiwan }
\email{tjkuo1215@gmail.com}
\author{Chang-Shou Lin}
\address{Taida Institute for Mathematical Sciences (TIMS), Center for Advanced Study in
Theoretical Sciences (CASTS), National Taiwan University, Taipei 10617, Taiwan }
\email{cslin@math.ntu.edu.tw}
\author{Kouichi Takemura}
\address{School of Mathematics, University of Leeds, Leeds LS2 9JT, United Kingdom\\
Department of Mathematics, Faculty of Science and Engineering, Chuo
University, 1-13-27 Kasuga, Bunkyo-ku Tokyo 112-8551, Japan}
\email{takemura@math.chuo-u.ac.jp}

\begin{abstract}
We study the spectral polynomial of the Treibich-Verdier potential. Such
spectral polynomial, which is a generalization of the classical Lam\'{e}
polynomial, plays fundamental roles in both the finite-gap theory and the ODE
theory of Heun's equation. In this paper, we prove that all the roots of such
spectral polynomial are real and distinct under some assumptions. The proof
uses the classical concept of Sturm sequence and isomonodromic theories. We
also prove an analogous result for a polynomial associated with a generalized
Lam\'{e} equation. Differently, our new approach is based on the viewpoint of
the monodromy data.

\end{abstract}
\maketitle

\section{Introduction}

Throughout the paper, we use the notations $\omega_{0}=0$, $\omega_{1}=1$,
$\omega_{2}=\tau$, $\omega_{3}=1+\tau$ and $\Lambda_{\tau}=\mathbb{Z+Z}\tau$,
where $\tau \in \mathbb{H}=\{ \tau|\operatorname{Im}\tau>0\}$. Define $E_{\tau
}:=\mathbb{C}/\Lambda_{\tau}$ to be a flat torus in the plane and $E_{\tau
}[2]:=\{ \frac{\omega_{k}}{2}|0\leq k\leq3\}+\Lambda_{\tau}$ to be the set
consisting of the lattice points and 2-torsion points in $E_{\tau}$.

In the literature, a smooth period function $q(z)$ satisfying $q(z)\in
\mathbb{R}$ for $z\in \mathbb{R}$ is called a \emph{finite-gap potential} if
the set $\sigma_{b}(H)$ of $H=-d^{2}/dz^{2}+q(z)$ satisfies%
\[
\overline{\sigma_{b}(H)}\cap \mathbb{R}=[E_{0},E_{1}]\cup \lbrack E_{2}%
,E_{3}]\cup \cdot \cdot \cdot \cup \lbrack E_{2g},+\infty)
\]
with $E_{0}<E_{1}<\cdot \cdot \cdot<E_{2g}$, where $\sigma_{b}(H)$ is the
spectrum of bounded bands, that is,%
\[
E\in \sigma_{b}(H)\Leftrightarrow \text{ Every solution of }(H-E)f(z)=0\text{ is
bounded on }z\in \mathbb{R}\text{.}%
\]
Recall that $\wp(z)=\wp(z|\tau)$ is the Weierstrass elliptic function with
periods $1$ and $\tau$, defined by%
\[
\wp(z|\tau):=\frac{1}{z^{2}}+\sum_{\omega \in \Lambda_{\tau}\backslash
\{0\}}\left(  \frac{1}{(z-\omega)^{2}}-\frac{1}{\omega^{2}}\right)  ,
\]
and $e_{k}=e_{k}(\tau):=\wp(\frac{\omega_{k}}{2}|\tau)$ for $k\in \{1,2,3\}$.
In \cite{Ince}, Ince proved that if $\tau \in i\mathbb{R}_{>0}$ (i.e. $E_{\tau
}$ is a rectangle torus) and $n\in \mathbb{N}$, then the potential
$n(n+1)\wp(z+\frac{\omega_{k}}{2}|\tau)$, $k\in \{2,3\}$, is a finite-gap
potential. Surprisingly, the finite-gap potential is related to the KdV
theory. A potential $q(z)$ is called an \emph{algebro-geometric finite-gap
potential }if there is an odd-order differential operator $A=(d/dz)^{2m+1}%
+\sum_{j=0}^{2m-1}b_{j}(z)(d/dz)^{2m-1-j}$ such that $[A,-d^{2}/dz^{2}%
+q(z)]=0$, that is, $q(z)$ is a solution of KdV hierarchy equations (cf.
\cite{Cal}). Under the condition that $q(z)$ is smooth periodic and
real-valued on $\mathbb{R}$, it is known (cf. \cite{Gesztesy}) that $q(z)$ is
a finite-gap potential if and only if it is an algebro-geometric finite-gap potential.

In 1990's, Treibich and Verdier found a new algebro-geometric finite-gap
potential, which is now called the \emph{Treibich-Verdier potential
}(\cite{TV}). This potential could be written as%
\begin{equation}
q^{(l_{0},l_{1},l_{2},l_{3})}(z):=\sum_{k=0}^{3}l_{k}(l_{k}+1)\wp \left(
z+\tfrac{\omega_{k}}{2}\right)  , \label{qz}%
\end{equation}
where $l_{k}\in \mathbb{N}\cup \{0\}$ for all $k$. See
\cite{Smirnov,Tak1,Tak2,Tak3,Tak4,Tak5} and references therein for historical
reviews and subsequent developments. Notice that $E_{j}$ for the potential
(\ref{qz}) might not be real in general; see Remark \ref{remark3} below.

The polynomial $\prod_{i=0}^{2g}(E-E_{i})$ is called \emph{the spectral
polynomial} of the operator $H=-d^{2}/dz^{2}+q(z)$. A basic question is how to
determine the spectral polynomial of the operator%
\begin{equation}
H^{(l_{0},l_{1},l_{2},l_{3})}:=-\frac{d^{2}}{dz^{2}}+\sum_{k=0}^{3}l_{k}%
(l_{k}+1)\wp \left(  z+\tfrac{\omega_{k}}{2}\right)  . \label{oper}%
\end{equation}
Remark that $H^{(l_{0},l_{1},l_{2},l_{3})}$ is also the Hamiltonian of the
$BC_{1}$ (one particle) Inozemtsev model (cf. \cite{Tak1}).

In the literature, there are two methods to compute the spectral polynomials.
One way is to study the so-called "polynomial solutions" of the following
generalized Lam\'{e} equation (GLE):%
\begin{equation}
y^{\prime \prime}(z)=I(z)y(z)=\left[  \sum_{k=0}^{3}l_{k}(l_{k}+1)\wp \left(
z+\tfrac{\omega_{k}}{2}\right)  -E\right]  y(z)\text{ in }E_{\tau},
\label{GLE-0}%
\end{equation}
where $E\in \mathbb{C}$. GLE (\ref{GLE-0}) is a Fuchsian equation with singular
points in $E_{\tau}[2]$. By projecting $E_{\tau}$ onto $\mathbb{CP}^{1}$ via
$x=\wp(z)$, GLE (\ref{GLE-0}) becomes a second order Fuchsian equation with
four singular points $\{e_{1},e_{2},e_{3},\infty \}$ on $\mathbb{CP}^{1}$. See
(\ref{Heun}) in Section 3 for this new ODE. Therefore, GLE (\ref{GLE-0}) is an
elliptic form of Heun's equation. This fact was first pointed out by Darboux
\cite{Darboux} in 1882. Classically, people are interested in finding the
condition on $E$ such that the new ODE (\ref{Heun}) has a "polynomial"
solution in terms of $x$ (see Section 3 for a precise definition). It is known
that there is a \emph{polynomial}
\[
Q(E)=Q^{(l_{0},l_{1},l_{2},l_{3})}(E)=Q^{(l_{0},l_{1},l_{2},l_{3})}(E|\tau)
\]
such that the new ODE (\ref{Heun}) has a "polynomial" solution if and only if
$Q(E)=0$. Furthermore, it was proved in \cite[Section 6]{Tak5} that%
\begin{align}
&  \text{\textit{this polynomial $Q^{(l_{0},l_{1},l_{2},l_{3})}(E|\tau)$
coincides with the spectral}}\label{chara}\\
&  \text{\textit{polynomial of the operator $H^{(l_{0},l_{1},l_{2},l_{3})}$ up
to a multiplication.}}\nonumber
\end{align}
Therefore in this paper, we also denote by $Q^{(l_{0},l_{1},l_{2},l_{3}%
)}(E|\tau)$ to be the \textit{spectral polynomial} of the operator
$H^{(l_{0},l_{1},l_{2},l_{3})}$.

When $(l_{0},l_{1},l_{2},l_{3})=(n,0,0,0)$, GLE (\ref{GLE-0}) turns to be
\emph{the classical Lam\'{e} equation} (\cite{Halphen,Poole,Whittaker-Watson})%
\begin{equation}
y^{\prime \prime}(z)=[n(n+1)\wp(z)+B]y(z)\text{ \ in \ }E_{\tau}, \label{Lame}%
\end{equation}
and the corresponding polynomial $\ell_{n}(B|\tau):=Q^{(n,0,0,0)}(-B|\tau)$ is
called the \emph{Lam\'{e} polynomial} in the literature; see e.g.
\cite{Whittaker-Watson,Poole}. It is known that $\deg_{B}\ell_{n}=2n+1$. We
refer to \cite{Beukers-Waall,CLW} for the general theory of the Lam\'{e}
equation and explicit forms of the Lam\'{e} polynomial.

A remarkable result about $\ell_{n}(B|\tau)$ which is related to the
finite-gap phenomena is the following \emph{real-root property}.\medskip

\noindent \textbf{Theorem A.} (cf. \cite{Whittaker-Watson}) \emph{Let $\tau \in
i\mathbb{R}_{>0}$ and $n\in \mathbb{N}$. Then all the roots of $\ell_{n}%
(\cdot|\tau)$ are real and distinct}.\medskip

Theorem A has another important application. We associate the polynomial
$\ell_{n}(B|\tau)$ with a \emph{hyperelliptic curve Y$_{n}(\tau
):=\{(B,W)|W^{2}=\ell_{n}(B|\tau)\}.$} Since $\ell_{n}(B|\tau)\in
\mathbb{Q}[g_{2}(\tau),g_{3}(\tau)][B]$, where $g_{2}$, $g_{3}$ are
coefficients of%
\[
\wp^{\prime}(z|\tau)^{2}=4\wp(z|\tau)^{3}-g_{2}(\tau)\wp(z|\tau)-g_{3}(\tau),
\]
Theorem A implies that the discriminant of $\ell_{n}(\cdot|\tau)$, which
defines a modular form with respect to $SL(2,\mathbb{Z})$, has only finitely
many zeros modulo $SL(2,\mathbb{Z})$. This means that \emph{except for
finitely many tori, }$\ell_{n}(B|\tau)$\emph{ has no multiple roots}, or in
other words, \emph{the associated hyperelliptic curve Y$_{n}(\tau)$ is smooth
at its finite branch points}. See \cite[Theorem 7.4]{CLW}. This hyperelliptic
curve has some interesting geometric properties. For example, it was shown in
\cite{CLW} that $Y_{n}(\tau)\cup \{ \infty \}$ is a cover of the torus\emph{
}$E_{\tau}$.

The above argument highlights the importance of studying the following
question:\medskip

\noindent \textbf{(Q)}: \textit{For }$l_{0},l_{1},l_{2},l_{3}\in \mathbb{N}%
\cup \{0\}$\textit{ and }$\tau \in i\mathbb{R}_{>0}$,\textit{ whether are all
the roots of $Q^{(l_{0},l_{1},l_{2},l_{3})}(E|\tau)$ real and distinct}%
?\medskip

Theorem A already solves the special case $l_{1}=l_{2}=l_{3}=0$. Later, this
question was partially answered by the forth author \cite{Tak1}.\medskip

\noindent \textbf{Theorem B.} \cite[Proposition 3.3]{Tak1} \emph{Let }$\tau \in
i\mathbb{R}_{>0}$\emph{, }$l_{2}=l_{3}=0$ \emph{and} $l_{0},l_{1}\in
\mathbb{N}\cup \{0\}$.\emph{ Then all the roots of }$Q^{(l_{0},l_{1}%
,l_{2},l_{3})}(\cdot|\tau)$\emph{ are real and distinct}.\medskip

One main purpose of this paper is to settle question (Q) in new cases. Our
first main result is following.

\begin{theorem}
\label{thm1}Let $\tau \in i\mathbb{R}_{>0}$ and $l_{0},l_{1},l_{2},l_{3}%
\in \mathbb{N}\cup \{0\}$. Then all the roots of $Q^{(l_{0},l_{1},l_{2},l_{3}%
)}(\cdot|\tau)$ are real and distinct provided that one of the following
conditions holds.

\begin{itemize}
\item[(i)] $l_{3}=0$ and $l_{0}\geq l_{1}+l_{2}-1$.

\item[(ii)] $l_{0}+l_{3}+1=l_{1}+l_{2}$, $l_{2}+l_{3}\geq l_{0}+l_{1}+1$,
$l_{1}+l_{3}\geq l_{0}+l_{2}+1$.

\item[(iii)] $l_{0}+l_{3}=l_{1}+l_{2}+1$, $l_{0}+l_{1}\geq l_{2}+l_{3}+1$,
$l_{0}+l_{2}\geq l_{1}+l_{3}+1$.
\end{itemize}
\end{theorem}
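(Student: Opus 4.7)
The strategy is classical in spirit: realize the spectral polynomial $Q^{(l_{0},l_{1},l_{2},l_{3})}(\cdot|\tau)$ as the characteristic polynomial of a finite real \emph{Jacobi matrix} (a symmetric irreducible tridiagonal matrix with real entries). The principal minors of such a matrix form a classical Sturm sequence, and its spectrum is automatically real and simple; this supplies the Sturm-sequence half of the announced methodology. The isomonodromic input will enter when controlling the non-vanishing of the off-diagonal coefficients across the parameter regimes (i)--(iii).

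I first pass from the elliptic GLE (\ref{GLE-0}) to its rational form (\ref{Heun}) via $x=\wp(z)$, and absorb the leading Frobenius behaviour at the four singular points $\{e_{1},e_{2},e_{3},\infty\}$ through a gauge transformation $y=\wp'(z)^{\beta_{0}}\prod_{k=1}^{3}(x-e_{k})^{\beta_{k}}P(x)$, with each $\beta_{k}$ chosen from the local exponent pair $\{-l_{k},l_{k}+1\}$ so that ``polynomial'' solutions in the sense of Section~3 become honest polynomials $P(x)$ of some prescribed degree $N=N(l_{0},l_{1},l_{2},l_{3})$. Expand $P(x)$ in powers of $x-e_{\ast}$, the centre $e_{\ast}\in\{\infty,e_{1},e_{2},e_{3}\}$ being dictated by the configuration of the $l_{k}$. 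Substitution into the transformed Heun equation then produces a three-term linear recursion
\begin{equation*}
A_{j}\,c_{j+1}+(B_{j}-E)\,c_{j}+C_{j}\,c_{j-1}=0,\qquad 0\le j\le N,\quad c_{-1}=c_{N+1}=0,
\end{equation*}
where the $A_{j},B_{j},C_{j}$ are explicit expressions in $l_{0},l_{1},l_{2},l_{3}$ and $e_{1},e_{2},e_{3}$. This reads $(M-E\,I)\mathbf{c}=0$ for a tridiagonal matrix $M$, and by (\ref{chara}) the characteristic polynomial $\det(E I-M)$ coincides with $Q^{(l_{0},l_{1},l_{2},l_{3})}(E|\tau)$ up to a nonzero scalar.

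It therefore suffices to show that $M$ is conjugate, via a real diagonal similarity, to a real symmetric tridiagonal matrix with nonzero off-diagonal entries; equivalently, (a) each $B_{j}\in\mathbb{R}$, (b) each product $A_{j}\,C_{j+1}$ is real and strictly positive, and (c) neither $A_{j}$ nor $C_{j+1}$ vanishes for $0\le j\le N-1$. Reality in (a) follows from $e_{1},e_{2},e_{3}\in\mathbb{R}$ when $\tau\in i\mathbb{R}_{>0}$; irreducibility together with (b), (c) then forces simple spectrum. The three hypotheses (i)--(iii) of Theorem~\ref{thm1} enter precisely to ensure the existence of some gauge and expansion centre for which (b) and (c) hold: case (i) generalises Takemura's proof of Theorem B, using the margin $l_{0}\ge l_{1}+l_{2}-1$ to keep every $A_{j}C_{j+1}$ strictly positive when $l_{3}=0$, while cases (ii) and (iii) are ``balanced'' regimes in which the equalities $l_{0}+l_{3}\pm 1=l_{1}+l_{2}$ symmetrise the recurrence and the accompanying inequalities force positivity. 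Isomonodromic deformation along $\tau\in i\mathbb{R}_{>0}$ is then used to control $A_{j}$ and $C_{j+1}$ uniformly in $\tau$: if such a coefficient vanished for some $\tau$, the monodromy of (\ref{GLE-0}) at the corresponding $E$ would degenerate in a way incompatible with its continuous dependence on $\tau$ and with the dimension count of polynomial solutions at a generic rectangular torus.

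The main obstacle will be property (b). The recurrence coefficients are rational in $e_{1},e_{2},e_{3}$ and in the indices $j,l_{0},l_{1},l_{2},l_{3}$, and their positivity has to be certified separately in each of the three regimes, especially at the boundary indices $j\in\{0,N-1\}$ where the cancellations built into the choice of $\beta_{k}$ can produce apparent zeros. The inequalities in (i)--(iii) are presumably sharp precisely because they carve out exactly the parameter region where some choice of gauge makes (a)--(c) hold simultaneously; outside these regions a different technique beyond the Jacobi-matrix framework would be needed.
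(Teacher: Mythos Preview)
Your plan has two genuine gaps.

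First, the spectral polynomial $Q^{(l_{0},l_{1},l_{2},l_{3})}(E)$ is \emph{not} the characteristic polynomial of a single tridiagonal matrix arising from one gauge choice. For each of the four sign patterns in the exponents $(\tilde\alpha_{0},\tilde\alpha_{1},\tilde\alpha_{2},\tilde\alpha_{3})$ (with $\tilde\alpha_{k}\in\{-l_{k}/2,(l_{k}+1)/2\}$) you get a separate three-term recursion and a separate polynomial $P^{(j)}(E)$; the spectral polynomial is the \emph{product} $Q=P^{(0)}P^{(1)}P^{(2)}P^{(3)}$. So you must (a) verify the Sturm/Jacobi conditions for each of the four factors and (b) show that distinct factors $P^{(i)},P^{(j)}$ have no common root. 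Step (b) is a separate monodromy argument (comparing periodicities of the corresponding eigenfunctions under $z\mapsto z+\omega_{1},\,z+\omega_{2}$) and does not come from the tridiagonal structure. Your identification $\det(EI-M)=Q$ is therefore incorrect as stated.

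Second, your description of how isomonodromy enters is not the right one. The paper does \emph{not} deform in $\tau$ to control non-vanishing of recurrence coefficients. Case (i) is handled purely by the Sturm argument: one checks directly that for each factor $P^{(j)}$ the hypotheses $\gamma_{3}>0$, $\gamma_{1}+\gamma_{2}+\gamma_{3}+N>1$, $(t_{1}-t_{3})(t_{2}-t_{3})<0$ hold (this is where $l_{3}=0$ and $l_{0}\ge l_{1}+l_{2}-1$ are used). Cases (ii) and (iii) are \emph{not} obtained by ``symmetrising the recurrence'' at all; they are deduced from (i) via the isomonodromic identities
\[
Q^{(l_{0},l_{1},l_{2},l_{3})}(E)=Q^{(l_{0}^{e},l_{1}^{e},l_{2}^{e},l_{3}^{e})}(E)
\quad\text{resp.}\quad
Q^{(l_{0},l_{1},l_{2},l_{3})}(E)=Q^{(l_{0}^{o},l_{1}^{o},l_{2}^{o},l_{3}^{o})}(E),
\]
coming from generalized Darboux transformations between operators with \emph{different} parameters $(l_{0},l_{1},l_{2},l_{3})$. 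The inequalities in (ii) and (iii) are exactly the conditions that make the transformed quadruple land in the range of case (i). Without this reduction, there is no reason your positivity condition $A_{j}C_{j+1}>0$ should hold in regimes (ii)--(iii), and your proposed ``monodromy degeneration in $\tau$'' argument does not substitute for it.
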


We note that (i) is a generalization of Theorem B. In the case $l_{3}=0$,
Remark \ref{remark3} indicates that the condition $l_{0}\geq l_{1}+l_{2}-1$ is
sharp in the sense that $Q^{(2,2,2,0)}(\cdot|\tau)$ has two non-real roots
even for $\tau \in i\mathbb{R}_{>0}$. Our proof of Theorem \ref{thm1} will
apply the classical concept of Sturm sequence and some isomonodromic results
which were obtained via generalized Darboux transformations in \cite{Tak5}.
See Sections 2-3.

The second method to compute the polynomial $Q$ is to use the second symmetric
product of GLE (\ref{GLE-0}). This method is also well-known and closely
related to the monodromy representation of GLE (\ref{GLE-0}). See e.g.
\cite{Whittaker-Watson, Tak1,CLW}. Indeed, this method could work for a class
of ODEs including GLE (\ref{GLE-0}) and, particularly, its generalization:%
\begin{equation}
y^{\prime \prime}(z)=I^{(l_{0},l_{1},l_{2},l_{3})}(z;p,\tau)y(z)\text{ \ in
}E_{\tau}, \label{GLE-2}%
\end{equation}%
\begin{align*}
\text{with \  \  \ }I^{(l_{0},l_{1},l_{2},l_{3})}(z;p,\tau)=  &  q^{(l_{0}%
,l_{1},l_{2},l_{3})}(z)+\tfrac{3}{4}(\wp(z+p)+\wp(z-p))\\
&  +A(\zeta(z+p)-\zeta(z-p))+B,
\end{align*}
where $q^{(l_{0},l_{1},l_{2},l_{3})}$ is in (\ref{qz}), $A,B\in \mathbb{C}$ and
$\pm p\not \in E_{\tau}[2]$ are always assumed to be \emph{apparent
singularities} (i.e. non-logarithmic). Under this assumption, $B$ is
determined by $(p,A)$ as follows (see \cite{Chen-Kuo-Lin}):%
\begin{equation}
B=A^{2}-\zeta(2p)A-\tfrac{3}{4}\wp(2p)-\sum_{k=0}^{3}l_{k}(l_{k}+1)\wp \left(
p+\tfrac{\omega_{k}}{2}\right)  . \label{i60}%
\end{equation}
Here we recall that $\zeta(z)=\zeta(z|\tau):=-\int^{z}\wp(\xi|\tau)d\xi$ is
the Weierstrass zeta function with two quasi-periods:%
\begin{equation}
\eta_{1}(\tau)=\zeta(z+1|\tau)-\zeta(z|\tau),\text{ \ }\eta_{2}(\tau
)=\zeta(z+\tau|\tau)-\zeta(z|\tau). \label{quasi}%
\end{equation}
We are interested in GLE (\ref{GLE-2}) because it can be reduced to
(\ref{GLE-0}) with different values of $l_{k}$ by letting $p\rightarrow
\frac{\omega_{k}}{2}$ and has also a close relation to the well-known
Panlev\'{e} VI equation. For example, if $(A(\tau),B(\tau),p(\tau))$ depends
on $\tau$ suitably such that GLE (\ref{GLE-2}) preserves the monodromy as
$\tau$ deforms, then $p(\tau)$ satisfies the elliptic form of Panlev\'{e} VI
equation. See \cite{Chen-Kuo-Lin}.

By applying the second method to GLE (\ref{GLE-2}), we can obtain a
hyperelliptic curve $Y_{p}^{(l_{0},l_{1},l_{2},l_{3})}(\tau)=\{(A,W)|W^{2}%
=\mathcal{Q}^{(l_{0},l_{1},l_{2},l_{3})}(A;p,\tau)\}$ associated with GLE
(\ref{GLE-2}), where
\[
\mathcal{Q}^{(l_{0},l_{1},l_{2},l_{3})}(A;p,\tau)\in \mathbb{Q}[\wp(p|\tau
),\wp^{\prime}(p|\tau),e_{1}(\tau),e_{2}(\tau),e_{3}(\tau)][A]
\]
is a polynomial of $A$. Therefore, $\mathcal{Q}^{(l_{0},l_{1},l_{2},l_{3}%
)}(A;p,\tau)\in \mathbb{R}[A]$ if $\tau \in i\mathbb{R}_{>0}$ and $p\in
(0,\frac{1}{2})$. Similarly as $Y_{n}(\tau)$, $Y_{p}^{(l_{0},l_{1},l_{2}%
,l_{3})}(\tau)\cup \{ \infty \}$ has a natural covering over $E_{\tau}$ (see
\cite{CKL4}). Naturally we ask the following question: for fixed $p\in
(0,\frac{1}{2})$, is $Y_{p}^{(l_{0},l_{1},l_{2},l_{3})}(\tau)$ smooth except
for finitely many tori, or equivalently, does the polynomial $\mathcal{Q}%
^{(l_{0},l_{1},l_{2},l_{3})}(A;p,\tau)$ have distinct roots except for
finitely many tori?

This question seems not trivial because the form of $\mathcal{Q}^{(l_{0}%
,l_{1},l_{2},l_{3})}(A;p,\tau)$ is very complicated even for small $l_{k}$.
For $(l_{0},l_{1},l_{2},l_{3})=(1,0,0,0)$, we denote $y=A\wp^{\prime}(p)$,
$x=\wp(p)$ and write $\hat{\ell}_{1}(y;x,\tau)=\mathcal{Q}^{(1,0,0,0)}%
(A;p,\tau)$ for convenience. Then a calculation (see \cite{CKL4}) shows that
$\deg_{y}\hat{\ell}_{1}=6$ and {\allowdisplaybreaks{\footnotesize
\begin{align*}
\hspace*{-0.8cm}\hat{\ell}_{1} &  (y;x,\tau)=[{65536(4x^{3}-g_{2}x-g_{3})^{3}}]^{-1}\big[262144y^{6}-196608(12x^{2}-g_{2})y^{5}\\
&  +12288(400x^{4}-88g_{2}x^{2}+g_{2}^{2}-64g_{3}x)y^{4}+2048(8000x^{6}\\
&  -2512g_{2}x^{4}+380g_{2}^{2}x^{2}-7g_{2}^{3}-256g_{3}x^{3}+488g_{2}g_{3}x+320g_{3}^{2})y^{3}\\
&  -192(12x^{2}-g_{2})(40000x^{6}-17680g_{2}x^{4}+2028g_{2}^{2}x^{2}-3g_{2}^{3}\\
&  -15872g_{3}x^{3}+3712g_{2}g_{3}x+1792g_{3}^{2})y^{2}+16(9600000x^{10}\\
&  -7276800g_{2}x^{8}+1692032g_{2}^{2}x^{6}-134176g_{2}^{3}x^{4}+428g_{2}^{4}x^{2}+27g_{2}^{5}\\
&  -7741440g_{3}x^{7}+3287040g_{2}g_{3}x^{5}-386560g_{2}^{2}g_{3}x^{3}+2944g_{2}^{3}g_{3}x\\
&  +1308672g_{3}^{2}x^{4}-316416g_{2}g_{3}^{2}x^{2}+896g_{2}^{2}g_{3}^{2}-98304g_{3}^{3}x)y\\
&  -(8000x^{6}-3280g_{2}x^{4}-4g_{2}^{2}x^{2}+9g_{2}^{3}-4864g_{3}x^{3}+64g_{2}g_{3}x-256g_{3}^{2})\\
&  (11200x^{6}-6896g_{2}x^{4}+916g_{2}^{2}x^{2}+3g_{2}^{3}-8192g_{3}x^{3}+2048g_{2}g_{3}x+1024g_{3}^{2})\big].
\end{align*}
}}Our second result of this paper is following.

\begin{theorem}
\label{THM3}Let $\tau \in i\mathbb{R}_{>0}$. Then all the roots of $\hat{\ell
}_{1}(\cdot;x,\tau)=0$ are real and distinct whenever $x>e_{1}(\tau)$ or
$x<e_{2}(\tau)$. In other words,

\begin{itemize}
\item[(1)] if $\tau \in i\mathbb{R}_{>0}$ and $p\in(0,\frac{1}{2})$, then all
the roots of $\mathcal{Q}^{(1,0,0,0)}(\cdot;p,\tau)=0$ are real and distinct;

\item[(2)] if $\tau \in i\mathbb{R}_{>0}$ and $p\in(0,\frac{\tau}{2})$, then
all the roots of $\mathcal{Q}^{(1,0,0,0)}(\cdot;p,\tau)=0$ are purely
imaginary and distinct.
\end{itemize}
\end{theorem}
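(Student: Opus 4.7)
The plan is to reduce both parts of Theorem~\ref{THM3} to the single ``master'' assertion that $\hat{\ell}_{1}(\cdot;x,\tau)$ has six distinct real roots whenever $\tau\in i\mathbb{R}_{>0}$ and $x\in (e_{1},+\infty)\cup(-\infty,e_{2})$. For part~(1), $p\in(0,\tfrac{1}{2})$ gives $x=\wp(p)>e_{1}$ together with $\wp'(p)\in\mathbb{R}\setminus\{0\}$, so real roots in $y=A\wp'(p)$ translate into real $A$-roots of $\mathcal{Q}^{(1,0,0,0)}(\cdot;p,\tau)$. For part~(2), $p\in(0,\tfrac{\tau}{2})$ gives $\wp(p)\sim p^{-2}\to-\infty$ along the imaginary axis and $\wp(\tau/2)=e_{2}$, so $x\in(-\infty,e_{2})$, and since now $\wp'(p)\in i\mathbb{R}\setminus\{0\}$, real $y$-roots become purely imaginary $A$-roots. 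In fact the modular substitution $\tau\mapsto -1/\tau$, $z\mapsto z/\tau$, $p\mapsto p/\tau$ swaps the two real segments of the torus and interchanges the two regimes $x>e_{1}$ and $x<e_{2}$, so it suffices to treat the case $x>e_{1}$, i.e.\ $p\in(0,\tfrac{1}{2})$.

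Next I would reinterpret $\mathcal{Q}^{(1,0,0,0)}(A;p,\tau)=0$ as the condition that the monodromy of GLE~\eqref{GLE-2} with $(l_{0},l_{1},l_{2},l_{3})=(1,0,0,0)$ is abelian. Since the three finite singularities $\{0,\pm p\}$ are apparent, each contributes $\pm I$ to the local monodromy, and the global monodromy is generated by two matrices $M_{1},M_{\tau}\in SL(2,\mathbb{C})$ for the two period cycles; abelianness is exactly the existence of a common eigenvector. For $\tau\in i\mathbb{R}_{>0}$, $p\in(0,\tfrac{1}{2})$, and real $A$ (with $B$ determined by~\eqref{i60}), the potential $I^{(1,0,0,0)}(z;p,\tau)$ is real on $\mathbb{R}\setminus\{0,\pm p\}$; hence $M_{1}$ is conjugate in $SL(2,\mathbb{C})$ to an element of $SL(2,\mathbb{R})$, and its trace $\Delta(A):=\operatorname{tr} M_{1}(A)$ is a real-analytic entire function of $A\in\mathbb{R}$. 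The abelian-monodromy locus is then cut out by $\Delta(A)=\pm 2$ (up to the apparent sign), and my task becomes to locate and count the intersections of $\Delta$ with $\pm 2$.

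To execute the counting I would use the Hermite--Krichever ansatz for GLE~\eqref{GLE-2}, writing each abelian-monodromy solution as an explicit quasi-elliptic function of $z$ parametrised by a point $\lambda$ on the genus-two hyperelliptic curve $Y_{p}^{(1,0,0,0)}(\tau)=\{W^{2}=\mathcal{Q}^{(1,0,0,0)}(A;p,\tau)\}$. The reality structure on the potential induces an anti-holomorphic involution $\iota$ on $Y_{p}^{(1,0,0,0)}(\tau)$, and its fixed locus projects exactly onto the real roots of $\mathcal{Q}^{(1,0,0,0)}(\cdot;p,\tau)$. A Harnack-type count for real ovals of $\iota$, together with an oscillation/Sturm analysis of $\Delta(A)$ as $A$ sweeps $\mathbb{R}$---exploiting in particular that $B(A)=A^{2}+O(A)$ in~\eqref{i60} forces $|\Delta(A)|\to\infty$ as $|A|\to\infty$---should yield at least six distinct real crossings of $\Delta$ with $\pm 2$. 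As $\deg_{A}\mathcal{Q}^{(1,0,0,0)}=6$, these exhaust all roots and prove reality.

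The hard part will be distinctness, i.e.\ ruling out that two real roots collide for some $(p,\tau)$ in the admissible range; a priori $\iota$ could pair real and non-real points, or collapse several sheets of $Y_{p}^{(1,0,0,0)}(\tau)$ to a single $A$-value. Two routes look promising. The first is to deform continuously from an endpoint of $p\in(0,\tfrac{1}{2})$---e.g.\ $p\to\tfrac{1}{2}$, where the apparent pair merges with the 2-torsion point $\omega_{1}/2$ and $\mathcal{Q}^{(1,0,0,0)}$ degenerates into a spectral polynomial covered by Theorem~\ref{thm1}---and to use the elliptic Painlev\'{e}~VI isomonodromic deformation mentioned after~\eqref{i60} to track each root as $p$ varies, verifying that no two roots collide and no root leaves the real axis. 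The second is to establish strict monotonicity of $\Delta(A)$ between consecutive critical points, forcing simple crossings with $\pm 2$ and hence distinctness. Either route replaces the Sturm-sequence/Wronskian technique used for Theorem~\ref{thm1} by a spectral-curve argument tied directly to monodromy data, which is the novelty announced in the abstract.
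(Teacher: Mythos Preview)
Your proposal rests on a misreading of what $\mathcal{Q}^{(1,0,0,0)}(A;p,\tau)=0$ encodes. The monodromy of GLE~\eqref{GLE-2} is abelian for \emph{every} choice of $A$: the local monodromies at the $\omega_k/2$ are $I_2$ and at $\pm p$ are $-I_2$, so the surface relation $\gamma_+\gamma_-=\ell_1\ell_2\ell_1^{-1}\ell_2^{-1}$ forces $\rho(\ell_1)\rho(\ell_2)=\rho(\ell_2)\rho(\ell_1)$ identically. Two commuting elements of $SL(2,\mathbb{C})$ always share an eigenvector, so neither ``abelian'' nor ``existence of a common eigenvector'' cuts out any locus in $A$. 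The actual dichotomy is between a two-dimensional common eigenspace (both $\rho(\ell_j)$ simultaneously diagonalisable; this is $\mathcal{Q}\neq 0$) and a one-dimensional one (both $\pm$ a unipotent; this is $\mathcal{Q}=0$). Your trace function $\Delta(A)=\operatorname{tr}\rho(\ell_1)$ does not separate these cases: in the diagonalisable regime $\Delta=2\cos(2\pi s)$ can hit $\pm 2$ whenever $s\in\tfrac{1}{2}\mathbb{Z}$ (with $r\notin\tfrac{1}{2}\mathbb{Z}$) without $\mathcal{Q}$ vanishing, so the set $\{\Delta=\pm 2\}$ need not coincide with the zero set of $\mathcal{Q}$, and there is no reason its real crossings should number exactly six. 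The Harnack count and the oscillation analysis you sketch are therefore aimed at the wrong object.

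The paper instead parametrises the not-completely-reducible locus by the sign pair $(\varepsilon_1,\varepsilon_2)$ together with the off-diagonal ratio $C$ in~\eqref{Mono-2}; an external input (Theorem~4.A) gives one \emph{linear} equation in $C$ for each $(\varepsilon_1,\varepsilon_2)\neq(1,1)$ and a single \emph{cubic} for $(\varepsilon_1,\varepsilon_2)=(1,1)$, so the six roots split as $3+3$ and the three linear ones are automatically distinct. A conjugation $y\mapsto\overline{y(\bar z)}$ shows $A_j\in\mathbb{R}\Leftrightarrow C_j\in i\mathbb{R}\cup\{\infty\}$ for $p\in(0,\tfrac12)$ (and $A_j\in i\mathbb{R}$ for $p\in(0,\tfrac{\tau}{2})$), and the M\"obius change $X=(C\eta_1-\eta_2)/(\tau-C)$ converts the cubic in $C$ to the real cubic $X^3-3xX^2+\tfrac{g_2}{4}X+\tfrac{2g_3+xg_2}{4}=0$. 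Reality and distinctness then reduce to positivity of its discriminant, which is handled by a continuation argument close in spirit to your first ``deformation'' route: at a hypothetical first failure $(p_0,\tau_0)$ with a multiple root $X_0$, persistence of real-rootedness under perturbing $p$ forces $\partial H/\partial p(X_0;p_0)=0$, i.e.\ $12X_0^2=g_2$, which together with $H(X_0;p_0)=0$ gives $g_2^3=27g_3^2$, impossible for $\tau\in i\mathbb{R}_{>0}$. So your deformation instinct is sound, but it must be applied to this explicit cubic in the monodromy datum $C$, not to a trace function on the whole $A$-line.
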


Here $(0,\frac{\tau}{2}):=\{s\tau|s\in(0,\frac{1}{2})\}$. The difference of
the assertions (1) and (2) comes from the well known fact that for $\tau \in
i\mathbb{R}_{>0}$, $\wp^{\prime}(p|\tau)\in \mathbb{R}$ if $p\in(0,\frac{1}%
{2})$ and $\wp^{\prime}(p|\tau)\in i\mathbb{R}$ if $p\in(0,\frac{\tau}{2})$.

Remark that due to the appearance of singularities $\pm p\not \in E_{\tau}%
[2]$, Theorem \ref{THM3} can not be proved via the same idea of Sturm sequence
as Theorem \ref{thm1}. Our new approach of proving Theorem \ref{THM3} contains
two steps. The first step is to write down the equation for the (not
completely reducible) monodromy data $C$'s of GLE (\ref{GLE-2}) with
$(l_{0},l_{1},l_{2},l_{3})=(1,0,0,0)$, and the second one is to prove that
such $C$'s are purely imaginary and distinct if $\tau \in i\mathbb{R}_{>0}$.
Since \emph{the monodromy of GLE (\ref{GLE-2}) is not completely reducible if
and only if} $\mathcal{Q}^{(l_{0},l_{1},l_{2},l_{3})}(A;p,\tau)=0$ (see
\cite{CKL4}), the equation of the monodromy data $C$'s is also a polynomial of
degree $6$. This new polynomial has some advantages: (i) It can be decomposed
as a product of four polynomials; (ii) It has a nice structure for each
factors in (i). Therefore, we do not need to know the explicit formula of
$\hat{\ell}_{1}(y;x,\tau)$ in the proof of Theorem \ref{THM3}.

The paper is organized as follows. Theorem \ref{thm1} will be proved in
Sections 2-3 and Theorem \ref{THM3} will be proved in Sections 4-5.

\section{Polynomial solutions and Sturm sequence}

The purpose of this and next sections is to prove Theorem \ref{thm1}. To apply
the idea of Sturm sequence, we need to investigate polynomial solutions of
\begin{equation}
\frac{d^{2}y}{dx^{2}}+\left(  \frac{\gamma_{1}}{x-t_{1}}+\frac{\gamma_{2}%
}{x-t_{2}}+\frac{\gamma_{3}}{x-t_{3}}\right)  \frac{dy}{dx}+\frac{\alpha
\beta(x-t_{3})-q}{\prod_{j=1}^{3}(x-t_{j})}y=0. \label{eq:Heunt1t2t3}%
\end{equation}
It is a Fuchsian equation on $\mathbb{CP}^{1}$ with four regular singularities
$\{t_{1},t_{2},t_{3},\infty \}$. We impose the condition $\alpha+\beta
+1=\gamma_{1}+\gamma_{2}+\gamma_{3}$ so that the exponents at $x=\infty$ are
$\alpha$ and $\beta$. Set
\begin{equation}
y=\sum_{m=0}^{\infty}c_{m}(x-t_{3})^{m},\quad \text{where}\;c_{0}=1,
\end{equation}
and substitute it to the differential equation which is multiplied by
$\prod_{j=1}^{3}(x-t_{j})$ to equation (\ref{eq:Heunt1t2t3}). Then the
coefficients satisfy the following recursive relations:
\begin{align}
(t_{1}-t_{3})(t_{2}-t_{3})\gamma_{3}c_{1}  &  =qc_{0},\label{eq:Hlci}\\
(t_{1}-t_{3})(t_{2}-t_{3})(m+1)(m+\gamma_{3})c_{m+1}  &  =-(m-1+\alpha
)(m-1+\beta)c_{m-1}\nonumber \\
+[m\{(m-1+\gamma_{3})(t_{1}+t_{2}-2t_{3})+  &  (t_{2}-t_{3})\gamma_{1}%
+(t_{1}-t_{3})\gamma_{2}\}+q]c_{m}.\nonumber
\end{align}
If $t_{1}\neq t_{2}\neq t_{3}\neq t_{1}$ and $\gamma_{3}\not \in -{\mathbb{Z}%
}_{\geq0}$, then it is easy to see that $c_{r}$ is a polynomial in $q$ of
degree $r$ and we denote it by $c_{r}(q)$.

Moreover we assume that $\alpha=-N$ or $\beta=-N$ for some $N \in{\mathbb{Z}%
}_{\geq0} $. Let $q_{0}$ be a solution to the equation $c_{N+1} (q)=0$. Then
it follows from (\ref{eq:Hlci}) for $m=N+1$ that $c_{N+2} (q_{0})=0$. By
applying (\ref{eq:Hlci}) for $m=N+2, N+3, \dots$, we have $c_{m} (q_{0})=0$
for $m\geq N+3$. Hence, if $c_{N+1} (q_{0})=0$, then the differential equation
(\ref{eq:Heunt1t2t3}) have a non-zero polynomial solution. More precisely, we
obtain the following proposition.

\begin{proposition}
\label{prop:Heunpolym} Assume that $t_{1} \neq t_{2} \neq t_{3} \neq t_{1}$,
$\gamma_{3} \not \in -{\mathbb{Z}}_{\geq0} $, $\{ \alpha, \beta \} =\{ -N,
\gamma_{1}+ \gamma_{2} +\gamma_{3} +N-1 \}$ and $N \in{\mathbb{Z}}_{\geq0} $.
If $q$ is a solution to the equation $c_{N+1} (q)=0$, then the differential
equation (\ref{eq:Heunt1t2t3}) have a non-zero polynomial solution of degree
no more than $N$.
\end{proposition}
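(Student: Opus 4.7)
The plan is to verify that, under the stated hypotheses, the formal power series $y = \sum_{m \geq 0} c_m (x - t_3)^m$ with $c_0 = 1$ and coefficients determined by (\ref{eq:Hlci}) in fact terminates, producing a polynomial solution of (\ref{eq:Heunt1t2t3}) of degree at most $N$.

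First I would observe that the assumptions that $t_1, t_2, t_3$ are pairwise distinct and $\gamma_3 \notin -\mathbb{Z}_{\geq 0}$ make the leading factor $(t_1 - t_3)(t_2 - t_3)(m+1)(m + \gamma_3)$ of (\ref{eq:Hlci}) nonzero for every $m \geq 0$. Consequently each $c_m$ is uniquely determined as a polynomial in $q$, and a straightforward induction on $m$ confirms $\deg_q c_m = m$; in particular $c_{N+1}(q) = 0$ is a genuine polynomial equation of degree $N+1$, so picking a root $q_0$ makes sense. I would then exploit the hypothesis $\alpha = -N$ (the case $\beta = -N$ is symmetric). Setting $m = N+1$ in (\ref{eq:Hlci}) at $q = q_0$, the factor $(m - 1 + \alpha) = N + \alpha$ vanishes, killing the $c_{m-1} = c_N$ term on the right-hand side; the remaining terms are scalar multiples of $c_{N+1}(q_0) = 0$. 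Hence $c_{N+2}(q_0) = 0$.

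The termination then follows by an induction on $m \geq N+2$: the inductive hypothesis $c_{m-1}(q_0) = c_m(q_0) = 0$ makes the right-hand side of (\ref{eq:Hlci}) vanish identically, and the non-vanishing of the leading factor forces $c_{m+1}(q_0) = 0$. Since $c_0 = 1 \neq 0$, the resulting solution is nonzero and has degree at most $N$ in $x - t_3$. There is no real obstacle here; the entire argument rests on the two non-vanishing conditions already built into the hypotheses, namely $\gamma_3 \notin -\mathbb{Z}_{\geq 0}$ so that the Frobenius recursion at $t_3$ is never singular, and $\alpha = -N$ (or $\beta = -N$) so that the $c_{m-1}$ contribution is annihilated precisely at the critical index $m = N+1$.
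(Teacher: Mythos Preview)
Your proof is correct and follows essentially the same approach as the paper: the paper's argument (given in the paragraph preceding the proposition) likewise observes that at $m=N+1$ the factor $(m-1+\alpha)(m-1+\beta)$ vanishes since $\alpha=-N$ or $\beta=-N$, whence $c_{N+2}(q_0)=0$, and then uses the recursion (\ref{eq:Hlci}) inductively to conclude $c_m(q_0)=0$ for all $m\geq N+1$.
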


Next, we restrict to the case that all the parameters are \emph{real}. The
following proposition is shown immediately by applying the recursive equation
(\ref{eq:Hlci}).

\begin{proposition}
\label{prop:alt}
%We assume that all the parameters are real and $t_1<t_2<t_3$.\\
Assume that $\{ \alpha, \beta \} =\{ -N, \gamma_{1}+ \gamma_{2} +\gamma_{3}
+N-1 \}$, $N \in{\mathbb{Z}}_{\geq0} $, $\gamma_{1}$, $\gamma_{2}$ and
$\gamma_{3}$ are real, $\gamma_{3} >0$, $\gamma_{1}+ \gamma_{2} +\gamma_{3} +N
>1$ and $(t_{1} -t_{3})(t_{2}- t_{3}) <0$. Then the followings hold:

\begin{itemize}
\item[$(i)$] The sign of the coefficient of $q^{m}$ in $c_{m}(q)$ is that of
$(-1)^{m}$.

\item[$(ii)$] If $1\leq m \leq N$ and $c_{m}(q)=0$, then the values of
$c_{m+1} (q)$ and $c_{m-1} (q)$ are opposite in sign.
\end{itemize}
\end{proposition}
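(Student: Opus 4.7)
The plan is to exploit the recursion (\ref{eq:Hlci}) together with the sign information forced by the hypotheses. Writing $D=(t_{1}-t_{3})(t_{2}-t_{3})$, the hypothesis $D<0$ combined with $\gamma_{3}>0$ yields
\[
D(m+1)(m+\gamma_{3})<0 \quad \text{for all } m\ge 0,
\]
so the coefficient of $c_{m+1}$ on the left-hand side of (\ref{eq:Hlci}) is negative. Moreover, with $\{\alpha,\beta\}=\{-N,\gamma_{1}+\gamma_{2}+\gamma_{3}+N-1\}$ and $1\le m\le N$, one of the factors $(m-1+\alpha)$ or $(m-1+\beta)$ equals $m-1-N\le -1$ while the other equals $m-2+\gamma_{1}+\gamma_{2}+\gamma_{3}+N\ge \gamma_{1}+\gamma_{2}+\gamma_{3}+N-1>0$. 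Hence
\[
(m-1+\alpha)(m-1+\beta)<0,\quad 1\le m\le N.
\]
These two sign facts drive everything.

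For (i), I would argue by induction on $m$, tracking only the leading coefficient of $c_{m}(q)$. Because $c_{m-1}(q)$ has degree $m-1$ and $c_{m}(q)$ has degree $m$, the unique contribution of degree $m+1$ on the right-hand side of (\ref{eq:Hlci}) comes from the summand $q\,c_{m}(q)$. Writing $a_{m}$ for the coefficient of $q^{m}$ in $c_{m}(q)$, this yields the one-line recursion
\[
D(m+1)(m+\gamma_{3})\,a_{m+1}=a_{m},\qquad a_{0}=1.
\]
Since the multiplier is negative, the signs alternate and $\operatorname{sign}(a_{m})=(-1)^{m}$, giving (i).

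Statement (ii) is the core Sturm-type step. Specializing (\ref{eq:Hlci}) at a root $q$ of $c_{m}$ kills the $c_{m}$ summand and leaves
\[
D(m+1)(m+\gamma_{3})\,c_{m+1}(q)=-(m-1+\alpha)(m-1+\beta)\,c_{m-1}(q).
\]
The sign facts above show the scalar linking $c_{m+1}(q)$ and $c_{m-1}(q)$ is negative, which is precisely the required oppositeness, \emph{provided} $c_{m-1}(q)\ne 0$. This non-vanishing is the only delicate point, and I would settle it by a short backward induction: if $c_{m}(q)=c_{m-1}(q)=0$, then (\ref{eq:Hlci}) at index $m-1$ combined with the non-vanishing $(m-2+\alpha)(m-2+\beta)\ne 0$ (which is valid in the same range by the analysis above) forces $c_{m-2}(q)=0$, and iterating this descent eventually contradicts $c_{0}=1$. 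I expect this non-vanishing bookkeeping to be the only real subtlety; the sign computations themselves are immediate once the hypotheses are read correctly.
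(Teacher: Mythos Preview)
Your proposal is correct and follows exactly the approach the paper intends: the paper gives no detailed argument, simply stating that the proposition ``is shown immediately by applying the recursive equation (\ref{eq:Hlci}).'' You have fleshed out precisely that computation, including the backward-induction step establishing $c_{m-1}(q)\neq 0$, which the paper leaves entirely implicit.
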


By applying the argument of \emph{Sturm sequence}, we have:

\begin{theorem}
\label{thm:Sturm} Assume that $\{ \alpha, \beta \} =\{ -N, \gamma_{1}+
\gamma_{2} +\gamma_{3} +N-1 \}$, $N \in{\mathbb{Z}}_{\geq0} $, $\gamma_{1}$,
$\gamma_{2}$ and $\gamma_{3}$ are real, $\gamma_{3} >0$, $\gamma_{1}+
\gamma_{2} +\gamma_{3} +N >1$ and $(t_{1} -t_{3})(t_{2}- t_{3}) <0$. Then the
equation $c_{N+1} (q)=0$ has all its roots real and unequal.
\end{theorem}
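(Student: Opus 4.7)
The plan is to apply the classical Sturm-sequence argument to the finite sequence of real polynomials $c_0(q),c_1(q),\dots,c_{N+1}(q)$, where $c_0=1$ and $c_m(q)$ has degree $m$ in $q$ by the recursion (\ref{eq:Hlci}). For each $q\in\mathbb{R}$ let $V(q)$ denote the number of sign changes in this finite sequence (with zero entries skipped). The aim is to show that $V(q)$ can jump only at the zeros of $c_{N+1}$, to identify the values $V(-\infty)=0$ and $V(+\infty)=N+1$, and then to deduce from a sign-counting that $c_{N+1}$ must have $N+1$ distinct real roots.

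The behaviour of $V$ at infinity is immediate from Proposition~\ref{prop:alt}(i): the leading coefficient of $c_m$ has sign $(-1)^m$, so as $q\to+\infty$ one obtains the alternating pattern $+,-,+,-,\dots$ with $V(+\infty)=N+1$, while as $q\to-\infty$ every $c_m(q)$ is positive and $V(-\infty)=0$. For the invariance of $V$ at interior zeros, if $c_m(q_0)=0$ with $1\le m\le N$, then Proposition~\ref{prop:alt}(ii) guarantees that $c_{m-1}(q_0)$ and $c_{m+1}(q_0)$ are nonzero and of opposite sign; hence the triple $(c_{m-1},c_m,c_{m+1})$ contributes exactly one sign change in a neighbourhood of $q_0$, independent of the sign or vanishing of $c_m$ nearby. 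An auxiliary point needed here is that two consecutive $c_m$'s cannot vanish simultaneously, which follows by downward induction via (\ref{eq:Hlci}), using that $(m-1+\alpha)(m-1+\beta)\neq 0$ for $1\le m\le N$; this is a direct check from $\{\alpha,\beta\}=\{-N,\gamma_1+\gamma_2+\gamma_3+N-1\}$ together with the hypothesis $\gamma_1+\gamma_2+\gamma_3+N>1$. Combining these facts, $V(q)$ is constant on any interval disjoint from the zero set of $c_{N+1}$.

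It then remains to count the jumps of $V$ across the real zeros of $c_{N+1}$. At such a zero $q_0$, the preceding step yields $c_N(q_0)\neq 0$, so $V$ changes by exactly $\pm 1$ at every sign-flip of $c_{N+1}$ and is unchanged at zeros of even multiplicity. Letting $r_+$ and $r_-$ denote the numbers of jumps of sizes $+1$ and $-1$ respectively, one obtains $r_+ - r_- = V(+\infty) - V(-\infty) = N+1$ and $r_+ + r_- \le \deg c_{N+1} = N+1$, which force $r_- = 0$, $r_+ = N+1$, and each such root of $c_{N+1}$ to be simple. Thus $c_{N+1}(q)$ has $N+1$ real and distinct roots, proving the theorem. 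The only mildly delicate point is the non-simultaneous vanishing of consecutive $c_m$'s, but beyond this the argument is a textbook application of the Sturm-sequence principle; the genuine difficulties in the paper will arise only later, when the Sturm approach must be replaced by isomonodromic methods to treat the full range of parameters in Theorem~\ref{thm1}.
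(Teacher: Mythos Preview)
Your proof is correct. Both your argument and the paper's rely on exactly the same two ingredients, Propositions~\ref{prop:alt}(i) and~(ii), but you organize them differently: the paper runs an explicit induction showing that the roots of $c_r$ interlace those of $c_{r-1}$ for each $r\le N+1$, whereas you package the same information into a global sign-variation count $V(q)$ along the full sequence $c_0,\dots,c_{N+1}$ and force $N+1$ simple real roots by the inequality $r_+ + r_- \le N+1 = r_+ - r_-$. Your route is slightly slicker in that it avoids tracking the interlacing explicitly, at the cost of the small auxiliary check that consecutive $c_m$'s cannot vanish simultaneously (which you handle correctly using $(m-1+\alpha)(m-1+\beta)\neq 0$ for $1\le m\le N$); the paper's route, on the other hand, yields the stronger interlacing statement as a byproduct. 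Both are standard presentations of the Sturm-sequence principle and are essentially equivalent.
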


\begin{proof}
We will show that the polynomial $c_{r}(q)$ $(1\leq r\leq N)$ has $r$ real
distinct roots $s_{i}^{(r)}$ $(i=1,\dots, r)$ such that
\[
s_{1}^{(r)}<s_{1}^{(r-1)}<s_{2}^{(r)}<s_{2}^{(r-1)}<\dots<s_{r-1}%
^{(r)}<s_{r-1}^{(r-1)}<s_{r}^{(r)}%
\]
by induction on $r$. The case $r=1$ is trivial. Let $k \in \mathbb{N}$ and
assume that the statement is true for $r\leq k$. From the assumption of the
induction,
\[
s_{1}^{(k)}<s_{1}^{(k-1)}<s_{2}^{(k)}<s_{2}^{(k-1)}<\dots<s_{k-1}%
^{(k-1)}<s_{k}^{(k)}.
\]
%It is immediate from (\ref{rec:invsp}) that the leading term of the polynomial $c_r(E)$ in $E$ is positive.
It follows from Proposition \ref{prop:alt} (i) that the sign of the
coefficient of $q^{r}$ in $c_{r}(q)$ is that of $(-1)^{r}$. Then
\[
\lim_{q \rightarrow-\infty} c_{k-1}(q) = +\infty, \; \lim_{q \rightarrow
+\infty} c_{k-1}(q) = (-1)^{k-1} \infty.
\]
For any $1\leq i\leq k-2$, since $c_{k-1}(s_{i}^{(k-1)})=c_{k-1}%
(s_{i+1}^{(k-1)})=0$ and $s_{i}^{(k-1)} <s_{i+1}^{(k)}< s_{i+1}^{(k-1)}$, the
sign of the value $c_{k-1}(s_{i+1}^{(k)})$ is $(-1)^{i}$. Furthermore, it
follows from $s^{(k)}_{1}<s^{(k-1)}_{1}$ and $s^{(k)}_{k}>s^{(k-1)}_{k-1}$
that $c_{k-1}(s^{(k)}_{1})>0$ and the sign of the value of $c_{k-1}%
(s^{(k)}_{k})$ is $(-1)^{k-1}$. In conclusion, the sign of the value
$c_{k-1}(s^{(k)}_{i})$ is $(-1)^{i-1}$ for any $1\leq i\leq k$. Then it
follows from Proposition \ref{prop:alt} (ii) that the sign of the value
$c_{k+1}(s_{i}^{(k)})$ is $(-1)^{i}$ for any $1\leq i\leq k$. Since
\begin{equation}
\lim_{q \rightarrow-\infty} c_{k+1}(q) = +\infty, \; \lim_{q \rightarrow
+\infty} c_{k+1}(q) = (-1)^{k+1} \infty,
\end{equation}
it follows from the intermediate value theorem that the polynomial
$c_{k+1}(E)$ has $k+1$ real distinct roots $s_{i}^{(k+1)} $ $(1\leq i\leq
k+1)$ such that the inequality $s_{1}^{(k+1)}<s_{1}^{(k)}<s_{2}^{(k+1)}%
<s_{2}^{(k)}<\dots<s_{k}^{(k)}<s_{k+1}^{(k+1)}$ is satisfied.
\end{proof}

If $t_{1}=t$, $t_{2}=1$ and $t_{3}=0$, then equation (\ref{eq:Heunt1t2t3}) is
the well-known \textit{Heun's equation}, which is a standard form of the
second order Fuchsian differential equation with four singularities on
$\mathbb{CP}^{1}$. Clearly the condition $(t_{1} -t_{3})(t_{2}- t_{3}) <0$ is
equivalent to $t<0$.

\section{Proof of Theorem \ref{thm1}}

In this section, we will apply Theorem \ref{thm:Sturm} to prove Theorem
\ref{thm1}. Recall the Hamiltonian of the $BC_{1}$ Inozemtsev model as
mentioned in Section 1:
\begin{equation}
H:=H^{(l_{0},l_{1},l_{2},l_{3})}:=-\frac{d^{2}}{dz^{2}}+\sum_{k=0}^{3}%
l_{k}(l_{k}+1)\wp(z+\tfrac{\omega_{k}}{2}). \label{Ino}%
\end{equation}
Note that the Hamiltonian is \emph{unchanged} by replacing $l_{k}$ to
$-l_{k}-1$ $(k=0,1,2,3)$.
%where $\wp (x)$ is the Weierstrass $\wp$-function with periods $(2\omega_1, 2\omega_3)$, $\omega _0=0, \; \omega_2=-(\omega_1 +\omega_3)$ and $l_i$ $(i=0,1,2,3)$ are coupling constants.
Let $f(z)$ be an eigenfunction of $H$ with an eigenvalue $E$, i.e.
\begin{equation}
(H-E)f(z)=\left(  -\frac{d^{2}}{dz^{2}}+\sum_{k=0}^{3}l_{k}(l_{k}%
+1)\wp(z+\tfrac{\omega_{k}}{2})-E\right)  f(z)=0. \label{InoEF}%
\end{equation}
Set $x=\wp(z)$. Applying the formula
\[
\wp(z+\tfrac{\omega_{i}}{2})=e_{i}+\frac{(e_{i}-e_{i^{\prime}})(e_{i}%
-e_{i^{\prime \prime}})}{\wp(z)-e_{i}},\; \, \text{where $\{i,i^{\prime
},i^{\prime \prime}\}=\{1,2,3\}$,}
\]
it is easy to see that equation (\ref{InoEF}) is equivalent to
\begin{align}
&  \left \{  \frac{d^{2}}{dx^{2}}+\frac{1}{2}\left(  \frac{1}{x-e_{1}}+\frac
{1}{x-e_{2}}+\frac{1}{x-e_{3}}\right)  \frac{d}{dx}-\frac{1}{4\prod_{j=1}%
^{3}(x-e_{j})}\right. \label{Heun}\\
&  \; \left.  \cdot \left(  \tilde{C}+l_{0}(l_{0}+1)x+\sum_{i=1}^{3}l_{i}%
(l_{i}+1)\frac{(e_{i}-e_{i^{\prime}})(e_{i}-e_{i^{\prime \prime}})}{x-e_{i}%
}\right)  \right \}  \tilde{f}(x)=0,\nonumber
\end{align}
where $\tilde{f}(\wp(z))=f(z)$ and $\tilde{C}=-E+\sum_{i=1}^{3}l_{i}%
(l_{i}+1)e_{i}$. Note that $e_{1}+e_{2}+e_{3}=0$. It is easy to see that the
Riemann scheme of equation (\ref{Heun}) is
\[%
\begin{Bmatrix}
e_{1} & e_{2} & e_{3} & \infty \\
\frac{-l_{1}}{2} & \frac{-l_{2}}{2} & \frac{-l_{3}}{2} & \frac{-l_{0}}{2}\\
\frac{l_{1}+1}{2} & \frac{l_{2}+1}{2} & \frac{l_{3}+1}{2} & \frac{l_{0}+1}{2}%
\end{Bmatrix}
.
\]

Let $\tilde{\alpha}_{i}=-l_{i}/2$ or $(l_{i}+1)/2$ for each $i\in \{0,1,2,3\}$.
Set
\[
\Phi^{(\tilde{\alpha}_{1},\tilde{\alpha}_{2},\tilde{\alpha}_{3})}%
(x)=\prod_{j=1}^{3}(x-e_{j})^{\tilde{\alpha}_{j}}\quad \text{and}\quad \tilde
{f}(x)=\Phi^{(\tilde{\alpha}_{1},\tilde{\alpha}_{2},\tilde{\alpha}_{3}%
)}(x)f(x).
\]
Then $\tilde{f}(x)$ solves equation (\ref{Heun}) is equivalent to that $f(x)$
satisfies
\begin{align}
\frac{d^{2}f(x)}{dx^{2}}+  &  \sum_{i=1}^{3}\frac{2\tilde{\alpha}_{i}+\frac
{1}{2}}{x-e_{i}}\frac{df(x)}{dx}+\left(  \frac{(\tilde{\alpha}_{1}%
+\tilde{\alpha}_{2}+\tilde{\alpha}_{3}-\frac{l_{0}}{2})(\tilde{\alpha}%
_{1}+\tilde{\alpha}_{2}+\tilde{\alpha}_{3}+\frac{l_{0}+1}{2})x}{(x-e_{1}%
)(x-e_{2})(x-e_{3})}\right. \nonumber \\
&  \left.  +\frac{\frac{E}{4}-e_{1}(\tilde{\alpha}_{2}+\tilde{\alpha}_{3}%
)^{2}-e_{2}(\tilde{\alpha}_{1}+\tilde{\alpha}_{3})^{2}-e_{3}(\tilde{\alpha
}_{1}+\tilde{\alpha}_{2})^{2}}{(x-e_{1})(x-e_{2})(x-e_{3})}\right)  f(x)=0.
\label{Heun2}%
\end{align}
This equation is in the form of equation (\ref{eq:Heunt1t2t3}) by setting
\[
\alpha=\tilde{\alpha}_{0}+\tilde{\alpha}_{1}+\tilde{\alpha}_{2}+\tilde{\alpha
}_{3},
\]%
\[
\beta=-\tilde{\alpha}_{0}+\tfrac{1}{2}+\tilde{\alpha}_{1}+\tilde{\alpha}%
_{2}+\tilde{\alpha}_{3},
\]%
\[
\gamma_{i}=2\tilde{\alpha}_{i}+\tfrac{1}{2},\quad t_{i}=e_{i},\quad i=1,2,3,
\]%
\[
q=-\left(  \tfrac{E}{4}-e_{1}(\tilde{\alpha}_{2}+\tilde{\alpha}_{3})^{2}%
-e_{2}(\tilde{\alpha}_{1}+\tilde{\alpha}_{3})^{2}-e_{3}(\tilde{\alpha}%
_{1}+\tilde{\alpha}_{2})^{2}+e_{3}\alpha \beta \right)  .
\]
We assume $\gamma_{3}=2\tilde{\alpha}_{3}+\tfrac{1}{2}>0$ as before. It is
well known that $e_{j}=e_{j}(\tau)\in \mathbb{R}$ and $e_{1}>e_{3}>e_{2}$
provided $\tau \in i\mathbb{R}_{>0}$, i.e.
\begin{equation}
(e_{1}-e_{3})(e_{2}-e_{3})<0\quad \text{if}\; \tau \in i\mathbb{R}_{>0}.
\label{eqe}%
\end{equation}
Write $f(x)=\sum_{r=0}^{\infty}c_{r}(x-e_{3})^{r}$ with $c_{0}=1$, then
$c_{r}$ is a polynomial in $E$ of degree $r$. We denote it by $c_{r}(E)$. We
set $N=-\alpha=-\tilde{\alpha}_{0}-\tilde{\alpha}_{1}-\tilde{\alpha}%
_{2}-\tilde{\alpha}_{3}$ and assume $N\in{\mathbb{Z}}_{\geq0}$. Then the
propositions in the previous section hold true for $c_{r}(E)$. In particular,
it follows from Proposition \ref{prop:Heunpolym} that if $c_{N+1}(E)=0$, then
the differential equation (\ref{Heun}) has a "polynomial" solution $\tilde
{f}(x)=\Phi^{(\tilde{\alpha}_{1},\tilde{\alpha}_{2},\tilde{\alpha}_{3}%
)}(x)f(x)$ in the sense that $f(x)$ is a polynomial of degree no more than $N$.

Let $P_{\tilde{\alpha}_{0}, \tilde{\alpha}_{1}, \tilde{\alpha}_{2} ,
\tilde{\alpha}_{3}}(E)$ be \emph{the monic polynomial obtained by normalising
$c_{N+1} (E) $}. Then
\[
\deg P_{\tilde{\alpha}_{0}, \tilde{\alpha}_{1}, \tilde{\alpha}_{2} ,
\tilde{\alpha}_{3}}(E)= N+1= -\tilde{\alpha}_{0}-\tilde{\alpha}_{1}%
-\tilde{\alpha}_{2}-\tilde{\alpha}_{3}+1.
\]
By Theorem \ref{thm:Sturm} and (\ref{eqe}), we immediately obtain the
following theorem.

\begin{theorem}
\label{prop:ellipreal} Assume that $l_{0}, l_{1}, l_{2} , l_{3} \in
{\mathbb{R}}$, $N = -\tilde{\alpha}_{0}-\tilde{\alpha}_{1}-\tilde{\alpha}%
_{2}-\tilde{\alpha}_{3} \in{\mathbb{Z}}_{\geq0} $, $2 \tilde{\alpha}_{3} +1/2
>0$, $-\tilde{\alpha}_{0} + \tilde{\alpha}_{1} + \tilde{\alpha}_{2} +
\tilde{\alpha}_{3} +1/2>0$ and $\tau \in i\mathbb{R}_{>0}$. Then the equation
$P_{\tilde{\alpha}_{0}, \tilde{\alpha}_{1}, \tilde{\alpha}_{2} , \tilde
{\alpha}_{3}}(E)=0$ has all its roots real and unequal.
\end{theorem}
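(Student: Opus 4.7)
The plan is to derive Theorem~\ref{prop:ellipreal} as a direct application of Theorem~\ref{thm:Sturm} using the parameter dictionary established just above the theorem statement. The proof is essentially bookkeeping: one must check that every hypothesis of Theorem~\ref{thm:Sturm} follows from the hypotheses of the present theorem, and that the affine change of variable relating $q$ and $E$ preserves the real/distinct root property.

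First, I would record the dictionary between the Heun parameters of equation~(\ref{eq:Heunt1t2t3}) and the elliptic parameters: $\alpha=\tilde{\alpha}_{0}+\tilde{\alpha}_{1}+\tilde{\alpha}_{2}+\tilde{\alpha}_{3}$, $\beta=-\tilde{\alpha}_{0}+\tfrac{1}{2}+\tilde{\alpha}_{1}+\tilde{\alpha}_{2}+\tilde{\alpha}_{3}$, $\gamma_{i}=2\tilde{\alpha}_{i}+\tfrac{1}{2}$, $t_{i}=e_{i}$, while $q$ is an affine function of $E$ with real coefficients. The latter is real because $l_{i}\in\mathbb{R}$ forces every $\tilde{\alpha}_{i}\in\mathbb{R}$ (since $\tilde{\alpha}_{i}\in\{-l_{i}/2,(l_{i}+1)/2\}$), and $\tau\in i\mathbb{R}_{>0}$ makes each $e_{i}$ real. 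The standing hypothesis $N=-\tilde{\alpha}_{0}-\tilde{\alpha}_{1}-\tilde{\alpha}_{2}-\tilde{\alpha}_{3}\in\mathbb{Z}_{\geq 0}$ then forces $\alpha=-N$, and a short computation gives
\begin{align*}
\gamma_{1}+\gamma_{2}+\gamma_{3}+N-1 &= 2(\tilde{\alpha}_{1}+\tilde{\alpha}_{2}+\tilde{\alpha}_{3})+\tfrac{1}{2}-(\tilde{\alpha}_{0}+\tilde{\alpha}_{1}+\tilde{\alpha}_{2}+\tilde{\alpha}_{3}) \\
&= -\tilde{\alpha}_{0}+\tilde{\alpha}_{1}+\tilde{\alpha}_{2}+\tilde{\alpha}_{3}+\tfrac{1}{2} = \beta,
\end{align*}
so the pairing $\{\alpha,\beta\}=\{-N,\gamma_{1}+\gamma_{2}+\gamma_{3}+N-1\}$ demanded by Theorem~\ref{thm:Sturm} is automatic.

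Second, I would verify the remaining numerical hypotheses of Theorem~\ref{thm:Sturm}. Reality of $\gamma_{1},\gamma_{2},\gamma_{3}$ is immediate from $l_{i}\in\mathbb{R}$. The condition $\gamma_{3}>0$ is precisely $2\tilde{\alpha}_{3}+\tfrac{1}{2}>0$, and the condition $\gamma_{1}+\gamma_{2}+\gamma_{3}+N>1$ is, by the same computation as above, equivalent to $-\tilde{\alpha}_{0}+\tilde{\alpha}_{1}+\tilde{\alpha}_{2}+\tilde{\alpha}_{3}+\tfrac{1}{2}>0$. Finally, $(t_{1}-t_{3})(t_{2}-t_{3})=(e_{1}-e_{3})(e_{2}-e_{3})<0$ for $\tau\in i\mathbb{R}_{>0}$ by (\ref{eqe}).

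Third, Theorem~\ref{thm:Sturm} now yields that the polynomial $c_{N+1}(q)$ has all of its $N+1$ roots real and distinct. Since $P_{\tilde{\alpha}_{0},\tilde{\alpha}_{1},\tilde{\alpha}_{2},\tilde{\alpha}_{3}}(E)$ is, up to a nonzero scalar, obtained from $c_{N+1}$ via the real affine substitution $q=-\tfrac{E}{4}+(\text{real constants})$, the real-root/distinct-root property transfers verbatim to $P_{\tilde{\alpha}_{0},\tilde{\alpha}_{1},\tilde{\alpha}_{2},\tilde{\alpha}_{3}}(E)$. There is essentially no obstacle here: all the nontrivial sign and interlacing analysis was done in Section~2, and the elliptic statement is a clean corollary once the dictionary above is written down. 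The only point to watch is that the $q\leftrightarrow E$ substitution has real coefficients, which is transparent once one remembers that the $e_{i}$ are real for rectangular tori.
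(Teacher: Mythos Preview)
Your proposal is correct and follows exactly the paper's approach: the paper simply states that Theorem~\ref{prop:ellipreal} follows immediately from Theorem~\ref{thm:Sturm} together with the inequality~(\ref{eqe}), and you have faithfully unpacked the parameter dictionary to verify each hypothesis of Theorem~\ref{thm:Sturm}. The only additional remark worth making is that $t_{1},t_{2},t_{3}$ are pairwise distinct (needed for the recursion defining $c_{r}$) because $e_{1}>e_{3}>e_{2}$ for $\tau\in i\mathbb{R}_{>0}$, but this is implicit in~(\ref{eqe}) and in your argument.
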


Remark that we do not need to assume $l_{j}\in \mathbb{Z}$ in Theorem
\ref{prop:ellipreal}.

From now on, we assume that $l_{0},l_{1},l_{2},l_{3}\in{\mathbb{Z}}_{\geq0}$.
We recall the following important result from \cite{Tak1}, which establishes
the precise relation between the spectral polynomial $Q^{(l_{0},l_{1}%
,l_{2},l_{3})}(E)$ and the aforementioned polynomial $P_{\tilde{\alpha}%
_{0},\tilde{\alpha}_{1},\tilde{\alpha}_{2},\tilde{\alpha}_{3}}(E)$. This plays
a key role in our proof of Theorem \ref{thm1}.

If $l_{0}+l_{1} +l_{2}+l_{3}$ is even, then the spectral polynomial $Q(E)
=Q^{(l_{0},l_{1},l_{2},l_{3})}(E)$ of $H^{(l_{0},l_{1},l_{2},l_{3})} $ is
written as $Q(E)=P^{(0)} (E) P^{(1)} (E) P^{(2)} (E) P^{(3)} (E)$,
where{\allowdisplaybreaks
\begin{align*}
&  P^{(0)} (E) = P_{-l_{0}/2, -l_{1}/2, -l_{2}/2, -l_{3}/2}(E),\\
&  P^{(1)} (E) = \left \{
\begin{array}
[c]{ll}%
P_{-l_{0}/2, -l_{1}/2, (l_{2} +1)/2, (l_{3} +1)/2}(E), & l_{0} +l_{1} \geq
l_{2} +l_{3} +2 ,\\
1 , & l_{0} +l_{1} = l_{2} +l_{3} ,\\
P_{(l_{0}+1)/2, (l_{1} +1)/2, -l_{2} /2, -l_{3} /2}(E), & l_{0} +l_{1} \leq
l_{2} +l_{3} -2,
\end{array}
\right. \\
&  P^{(2)} (E) = \left \{
\begin{array}
[c]{ll}%
P_{-l_{0}/2, (l_{1}+1)/2, -l_{2} /2, (l_{3} +1)/2}(E), & l_{0} +l_{2} \geq
l_{1} +l_{3} +2 ,\\
1 , & l_{0} +l_{2} = l_{1} +l_{3} ,\\
P_{(l_{0}+1)/2, -l_{1}/2, (l_{2} +1)/2, -l_{3} /2}(E), & l_{0} +l_{2} \leq
l_{1} +l_{3} -2,
\end{array}
\right. \\
&  P^{(3)} (E) = \left \{
\begin{array}
[c]{ll}%
P_{-l_{0}/2, (l_{1}+1)/2, (l_{2} +1)/2, -l_{3} /2}(E), & l_{0} +l_{3} \geq
l_{1} +l_{2} +2 ,\\
1 , & l_{0} +l_{3} = l_{1} +l_{2} ,\\
P_{(l_{0}+1)/2, -l_{1} /2, -l_{2} /2, (l_{3} +1)/2}(E), & l_{0} +l_{3} \leq
l_{1} +l_{2} -2.
\end{array}
\right.
\end{align*}
}If $l_{0}+l_{1} +l_{2}+l_{3}$ is odd, then the spectral polynomial $Q(E)$ is
written as $Q(E)=P^{(0)} (E) P^{(1)} (E) P^{(2)} (E) P^{(3)} (E)$,
where{\allowdisplaybreaks
\begin{align*}
&  P^{(0)} (E) =\left \{
\begin{array}
[c]{ll}%
P_{-l_{0}/2, (l_{1} +1)/2, (l_{2} +1)/2, (l_{3} +1)/2}(E), & l_{0} \geq l_{1}
+ l_{2} +l_{3} +3 ,\\
1 , & l_{0} = l_{1}+l_{2} +l_{3} +1,\\
P_{(l_{0}+1)/2, -l_{1} /2, -l_{2} /2, -l_{3} /2}(E), & l_{0} \leq l_{1} +l_{2}
+l_{3} -1,
\end{array}
\right. \\
&  P^{(1)} (E) = \left \{
\begin{array}
[c]{ll}%
P_{(l_{0}+1)/2, -l_{1}/2, (l_{2} +1)/2, (l_{3} +1)/2}(E), & l_{1} \geq l_{0} +
l_{2} +l_{3} +3 ,\\
1 , & l_{1} = l_{0}+l_{2} +l_{3} +1,\\
P_{-l_{0}/2, (l_{1} +1) /2, -l_{2} /2, -l_{3} /2}(E), & l_{1} \leq l_{0}
+l_{2} +l_{3} -1,
\end{array}
\right. \\
&  P^{(2)} (E) = \left \{
\begin{array}
[c]{ll}%
P_{(l_{0}+1)/2, (l_{1}+1)/2, -l_{2} /2, (l_{3} +1)/2}(E), & l_{2} \geq l_{0}
+l_{1} +l_{3} +3 ,\\
1 , & l_{2} = l_{0} +l_{1} +l_{3} +1,\\
P_{-l_{0}/2, -l_{1}/2, (l_{2} +1)/2, -l_{3} /2}(E), & l_{2} \leq l_{0} +l_{1}
+l_{3} -1,
\end{array}
\right. \\
&  P^{(3)} (E) = \left \{
\begin{array}
[c]{ll}%
P_{(l_{0} +1)/2, (l_{1}+1)/2, (l_{2} +1)/2, -l_{3} /2}(E), & l_{3} \geq l_{0}
+l_{1} +l_{2} +3 ,\\
1 , & l_{3} = l_{0} +l_{1} +l_{2} +1,\\
P_{-l_{0}/2, -l_{1} /2, -l_{2} /2, (l_{3} +1)/2}(E), & l_{3} \leq l_{0} +l_{1}
+l_{2} -1.
\end{array}
\right.
\end{align*}
}Furthermore, it was shown in \cite[Theorem 3.2]{Tak1} that the equations
$P^{(i)}(E)=0$ and $P^{(j)}(E)=0$ $(i\neq j)$ \textit{do not have common
solutions}. We sketch the proof here for the reader's convenience.

\begin{proof}
Assume that there exists a common solution $E=E_{0}$. Let $\tilde{f}^{(i)}(x)$
(resp. $\tilde{f}^{(j)}(x)$) be the solution of equation (\ref{Heun}) with
$E=E_{0}$. Since $\tilde{f}^{(i)}(\wp(z))$ and $\tilde{f}^{(j)}(\wp(z))$ form
a basis of solutions to (\ref{InoEF}), the Wronskian is a non-zero constant.
However it contradicts that the periodicity of $\tilde{f}^{(i)}(\wp(z))$ and
$\tilde{f}^{(j)}(\wp(z))$ with respect to the shift $z \to z+\omega_{1}$ or $z
\to z+\omega_{2}$ is different.
\end{proof}

The following result proves Theorem \ref{thm1} under the condition (i).

\begin{theorem}
\label{thm:Qrealdist} Assume that $l_{0}, l_{1}, l_{2} , l_{3} \in{\mathbb{Z}%
}_{\geq0}$. If $l_{3}=0$, $l_{0} \geq l_{1} +l_{2} -1$ and $\tau \in i
{\mathbb{R}}_{>0}$, then the zeros of the spectral polynomial $Q^{(l_{0}%
,l_{1},l_{2},l_{3})} (E)$ are all real and unequal.
\end{theorem}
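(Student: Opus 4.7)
The plan is to invoke the factorization $Q^{(l_{0},l_{1},l_{2},l_{3})}(E)=P^{(0)}(E)P^{(1)}(E)P^{(2)}(E)P^{(3)}(E)$ from \cite{Tak1} recalled in the excerpt, and apply Theorem \ref{prop:ellipreal} to each nontrivial factor. Since the excerpt already establishes that distinct factors $P^{(i)}$ and $P^{(j)}$ share no common zeros, it suffices to show that each individual $P^{(i)}(E)$ has only simple real roots; the product then inherits this property.

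I would split into two cases according to the parity of $l_{0}+l_{1}+l_{2}$ (recall $l_{3}=0$). In each case, the explicit description of $P^{(i)}(E)$ as a particular $P_{\tilde{\alpha}_{0},\tilde{\alpha}_{1},\tilde{\alpha}_{2},\tilde{\alpha}_{3}}(E)$ lets me read off the four parameters $\tilde{\alpha}_{j}$, after which it remains only to verify the three hypotheses of Theorem \ref{prop:ellipreal}: $N=-\tilde{\alpha}_{0}-\tilde{\alpha}_{1}-\tilde{\alpha}_{2}-\tilde{\alpha}_{3}\in\mathbb{Z}_{\geq 0}$, $2\tilde{\alpha}_{3}+1/2>0$, and $-\tilde{\alpha}_{0}+\tilde{\alpha}_{1}+\tilde{\alpha}_{2}+\tilde{\alpha}_{3}+1/2>0$. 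The first two conditions are automatic: because $l_{3}=0$, the value of $\tilde{\alpha}_{3}$ occurring in each of the formulas is either $0$ or $1/2$, and $N$ matches the degree of the corresponding subpolynomial, which is a nonnegative integer precisely under the inequality selecting that branch. The third condition is where the hypothesis $l_{0}\geq l_{1}+l_{2}-1$ plays its essential role: it selects, in every piecewise formula for $P^{(i)}$, the branch consistent with $l_{0}$ being the largest (or nearly largest) parameter, rules out the branches with $l_{0}+l_{j}\leq l_{\text{others}}-2$, and simultaneously forces the quantity $-\tilde{\alpha}_{0}+\tilde{\alpha}_{1}+\tilde{\alpha}_{2}+\tilde{\alpha}_{3}+1/2$ to be strictly positive.

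The only borderline situation occurs when $l_{0}+l_{1}+l_{2}$ is odd and $l_{0}=l_{1}+l_{2}-1$: here $P^{(0)}(E)=P_{(l_{0}+1)/2,-l_{1}/2,-l_{2}/2,0}(E)$ has $N=0$, so $\deg P^{(0)}=1$, and the third hypothesis of Theorem \ref{prop:ellipreal} in general fails; however a monic linear polynomial in $E$ with real coefficients trivially has a single real root, which is all that is needed. The main (and essentially the only) obstacle I anticipate is the combinatorial book-keeping, namely checking that the inequality $l_{0}\geq l_{1}+l_{2}-1$ together with the prescribed parity exactly exhausts the relevant branches of the piecewise formulas and that no incompatible subcase (such as $l_{1}\geq l_{0}+l_{2}+3$, or $l_{0}+l_{1}\leq l_{2}+l_{3}-2$ in the even-parity setting) can arise. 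Once this case analysis is carried out factor-by-factor, combining the conclusions for the $P^{(i)}$'s via the no-common-root property yields that all roots of $Q^{(l_{0},l_{1},l_{2},0)}(E)$ are real and distinct.
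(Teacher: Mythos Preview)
Your proposal is correct and follows essentially the same approach as the paper's own proof: the same factorization $Q=\prod P^{(i)}$, the same parity split, the same verification of the hypotheses of Theorem~\ref{prop:ellipreal} for each factor, and the same handling of the borderline case $l_{0}=l_{1}+l_{2}-1$ (odd parity) by noting that $P^{(0)}$ is linear. The only residual work, as you anticipated, is the routine combinatorial check that the hypothesis $l_{0}\geq l_{1}+l_{2}-1$ forces the correct branch in each piecewise formula; the paper carries this out explicitly factor by factor.
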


\begin{proof}
We only need to show that the zeros of each polynomial $P^{(j)}(E)$,
$j\in \{0,1,2,3\}$, are all real and unequal.

First we consider the case that $l_{0} + l_{1} + l_{2} $ is even. Then $l_{0}
\geq l_{1} +l_{2} -1$ gives
\begin{equation}
\label{eqe1}l_{0} \geq l_{1} +l_{2}.
\end{equation}
For $P^{(0)}(E)$, we have
\[
(\tilde{\alpha}_{0} , \tilde{\alpha}_{1} , \tilde{\alpha}_{2} , \tilde{\alpha
}_{3} )= (-l_{0}/2, -l_{1}/2, -l_{2}/2, -l_{3}/2 ).
\]
Since $2\tilde{\alpha}_{3}+1/2=-l_{3} +1/2 =1/2>0$ and
\[
-\tilde{\alpha}_{0} + \tilde{\alpha}_{1} + \tilde{\alpha}_{2} + \tilde{\alpha
}_{3} +1/2=\frac{l_{0} - l_{1} - l_{2} +1}{2} >0,
\]
it follows from Theorem \ref{prop:ellipreal} that the zeros of $P^{(0)} (E) $
are all real and unequal. Note from (\ref{eqe1}) and $l_{3}=0$ that $l_{0}
+l_{1} \geq l_{2} +l_{3}$. Hence $P^{(1)} (E) = P_{-l_{0}/2, -l_{1}/2, (l_{2}
+1)/2, (l_{3} +1)/2}(E)$ or $1$. If $P^{(1)} (E)=P_{-l_{0}/2, -l_{1}/2, (l_{2}
+1)/2, (l_{3} +1)/2}(E)$, then $2\tilde{\alpha}_{3}+1/2=l_{3} +3/2 >0$ and
\[
-\tilde{\alpha}_{0} + \tilde{\alpha}_{1} + \tilde{\alpha}_{2} + \tilde{\alpha
}_{3} +1/2=\frac{l_{0} - l_{1} + l_{2} +3}{2} >0.
\]
Again Theorem \ref{prop:ellipreal} implies that the zeros of $P^{(1)} (E) $
are all real and unequal. It is shown similarly that the zeros of both
$P^{(2)} (E) $ and $P^{(3)} (E) $ are all real and unequal.

We consider the remaining case that $l_{0} + l_{1} + l_{2} $ is odd.

For $P^{(0)}(E)$, since $l_{0} \geq l_{1} +l_{2} -1$, we need to consider the
cases $l_{0} = l_{1} +l_{2} -1$ and $l_{0} \geq l_{1} +l_{2} +1$ separately.
If $l_{0} = l_{1} +l_{2} -1$, then
\[
P^{(0)} (E)= P_{(l_{0}+1)/2, -l_{1} /2, -l_{2} /2, -l_{3} /2}(E)
\quad \text{and so} \quad \deg P^{(0)} (E)=1.
\]
If $l_{0} \geq l_{1} + l_{2} +1 $, then $P^{(0)} (E) = P_{-l_{0}/2, (l_{1}
+1)/2, (l_{2} +1)/2, (l_{3} +1)/2}(E)$ or $1$. Again Theorem
\ref{prop:ellipreal} implies that the zeros of $P^{(0)} (E) $ are all real and
unequal. Clearly $l_{0} \geq l_{1} +l_{2} -1$ gives $l_{0} +l_{2} + 1 \geq
l_{1} +2 l_{2} \geq l_{1}$ and $l_{0} +l_{1} + 1\geq l_{2}$. Hence $P^{(1)}
(E) = P_{-l_{0}/2, (l_{1} +1) /2, -l_{2} /2, -l_{3} /2}(E)$ or $1$. Again it
follows from Theorem \ref{prop:ellipreal} that the zeros of $P^{(1)} (E) $ are
all real and unequal. It is shown similarly that the zeros of $P^{(2)} (E) $
and $P^{(3)} (E) $ are all real and unequal. The proof is complete.
\end{proof}

\begin{remark}
\label{remark3}If $l_{3}=0$ and $l_{0}\leq l_{1}+l_{2}-2$, then the spectral
polynomial $Q(E)$ may have non-real zeros in the case $\tau \in i\mathbb{R}%
_{>0}$. For example, in the case $l_{0}=l_{1}=l_{2}=2$ and $l_{3}=0$, the
spectral polynomial $Q(E)$ have non-real zeros in the case $\tau \in
i\mathbb{R}_{>0}$ where $e_{1}>e_{3}>e_{2}$. In fact, it was shown in \cite[p.
396 and 400]{Tak4} that the spectral polynomial $Q(E)$ in the case
$l_{0}=l_{1}=l_{2}=2$ and $l_{3}=0$ coincides with that in the case $l_{0}=3$
and $l_{1}=l_{2}=l_{3}=1$ and
\begin{align*}
&  Q(E)=P^{(0)}(E)P^{(1)}(E)P^{(2)}(E)P^{(3)}(E),\;P^{(k)}(E)=E-15e_{k}%
,\;(k=1,2,3),\\
&  P^{(0)}(E)=E^{4}-54g_{2}E^{2}-864g_{3}E-135g_{2}^{2}.
\end{align*}
Then two roots of $P^{(0)}(E)=0$ are not real in the case $e_{1}>e_{3}>e_{2}$.
We take the simplest case $\tau=i$ for example, where $g_{3}(i)=0$ gives
$P^{(0)}(E)=E^{4}-54g_{2}E^{2}-135g_{2}^{2}$, which clearly has two non-real
roots because $g_{2}(i)>0$.
\end{remark}

Next, we need to apply some isomonodromic results in \cite{Tak5} to prove
Theorem \ref{thm1} under the assumptions (ii)-(iii). Recall $l_{0},l_{1}%
,l_{2},l_{3}\in{\mathbb{Z}}_{\geq0}$. If $l_{0}+l_{1}+l_{2}+l_{3}$ is even, we
set
\begin{align}
&  l_{0}^{e}=(l_{0}-l_{1}-l_{2}-l_{3})/2-1,\quad l_{1}^{e}=(l_{0}-l_{1}%
+l_{2}+l_{3})/2,\label{def:lie}\\
&  l_{2}^{e}=(l_{0}+l_{1}-l_{2}+l_{3})/2,\quad l_{3}^{e}=(l_{0}+l_{1}%
+l_{2}-l_{3})/2.\nonumber
\end{align}
Note that $-1-l_{0}^{e}=(-l_{0}+l_{1}+l_{2}+l_{3})/2$. Then $l_{0}^{e}%
,l_{1}^{e},l_{2}^{e},l_{3}^{e}\in{\mathbb{Z}}$ and{\allowdisplaybreaks%
\begin{align}
&  l_{0}^{e}+l_{1}^{e}+l_{2}^{e}+l_{3}^{e}=2l_{0}-1,\quad l_{0}^{e}+l_{1}%
^{e}-l_{2}^{e}-l_{3}^{e}=-2l_{1}-1,\label{def:liecon}\\
&  l_{0}^{e}-l_{1}^{e}+l_{2}^{e}-l_{3}^{e}=-2l_{2}-1,\quad l_{0}^{e}-l_{1}%
^{e}-l_{2}^{e}+l_{3}^{e}=-2l_{3}-1.\nonumber
\end{align}
}By means of generalized Darboux transformations (i.e. for a suitable choice
of $(l_{0}^{\prime},l_{1}^{\prime},l_{2}^{\prime},l_{3}^{\prime})$, there
exists a differential operator $L$ with cofficients being ellipitc functions
such that $H^{(l_{0}^{\prime},l_{1}^{\prime},l_{2}^{\prime},l_{3}^{\prime}%
)}L=LH^{(l_{0},l_{1},l_{2},l_{3})}$), it was proved in \cite[Section 4]{Tak5}
that the following eight operators are isomonodromic:{\allowdisplaybreaks%
\begin{align}
&  H^{(l_{0}^{e},l_{1}^{e},l_{2}^{e},l_{3}^{e})},\;H^{(l_{1}^{e},l_{0}%
^{e},l_{3}^{e},l_{2}^{e})},\;H^{(l_{2}^{e},l_{3}^{e},l_{0}^{e},l_{1}^{e}%
)},\;H^{(l_{3}^{e},l_{2}^{e},l_{1}^{e},l_{0}^{e})},\label{eq:gDHe}\\
&  H^{(l_{0},l_{1},l_{2},l_{3})},\;H^{(l_{1},l_{0},l_{3},l_{2})}%
,\;H^{(l_{2},l_{3},l_{0},l_{1})},\;H^{(l_{3},l_{2},l_{1},l_{0})}.\nonumber
\end{align}
}Note that coincidence of the monodromy implies the coincidence of the
spectral polynomial $Q(E)$'s, because the zeros of the spectral polynomial is
characterized by the double periodicity up to signs of the eigenfunction
$\tilde{f}(\wp(z))$ (see \cite[Section 3]{Tak1}). Therefore we have
\begin{align}
&  Q^{(l_{0}^{e},l_{1}^{e},l_{2}^{e},l_{3}^{e})}(E)=Q^{(l_{1}^{e},l_{0}%
^{e},l_{3}^{e},l_{2}^{e})}(E)=Q^{(l_{2}^{e},l_{3}^{e},l_{0}^{e},l_{1}^{e}%
)}(E)=Q^{(l_{3}^{e},l_{2}^{e},l_{1}^{e},l_{0}^{e})}(E)\label{eq:gDQe}\\
&  =Q^{(l_{0},l_{1},l_{2},l_{3})}(E)=Q^{(l_{1},l_{0},l_{3},l_{2}%
)}(E)=Q^{(l_{2},l_{3},l_{0},l_{1})}(E)=Q^{(l_{3},l_{2},l_{1},l_{0}%
)}(E).\nonumber
\end{align}

If $l_{0} +l_{1} +l_{2} +l_{3}$ is odd, we set
\begin{align}
&  l_{0}^{o}= (l_{0}+l_{1}+l_{2}+l_{3}+1)/2, \quad l_{1}^{o}= (l_{0}%
+l_{1}-l_{2}-l_{3}-1)/2,\label{def:lio}\\
&  l_{2}^{o}= (l_{0}-l_{1}+l_{2}-l_{3}-1)/2, \quad l_{3}^{o}= (l_{0}%
-l_{1}-l_{2}+l_{3}-1)/2.\nonumber
\end{align}
Then $l_{0}^{o} , l_{1}^{o} , l_{2}^{o} , l_{3}^{o} \in{\mathbb{Z}}$ and
\begin{align}
&  l_{0}^{o} + l_{1}^{o} + l_{2}^{o} + l_{3}^{o} = 2 l_{0} - 1, \quad
l_{0}^{o} + l_{1}^{o} - l_{2}^{o} - l_{3}^{o} = 2 l_{1} + 1,
\label{def:liocon}\\
&  l_{0}^{o} - l_{1}^{o} + l_{2}^{o} - l_{3}^{o} = 2 l_{2} + 1, \quad
l_{0}^{o} - l_{1}^{o} - l_{2}^{o} + l_{3}^{o} = 2 l_{3} + 1.\nonumber
\end{align}
It follows from \cite[Section 4]{Tak5} that the following eight operators are
isomonodromic.
\begin{align}
&  H^{(l_{0}^{o},l_{1}^{o},l_{2}^{o},l_{3}^{o})}, \; H^{(l_{1}^{o},l_{0}%
^{o},l_{3}^{o},l_{2}^{o})}, \; H^{(l_{2}^{o},l_{3}^{o},l_{0}^{o},l_{1}^{o})},
\; H^{(l_{3}^{o},l_{2}^{o},l_{1}^{o},l_{0}^{o})},\label{eq:gDHo}\\
&  H^{(l_{0},l_{1},l_{2},l_{3})}, \; H^{(l_{1},l_{0},l_{3},l_{2})}, \;
H^{(l_{2},l_{3},l_{0},l_{1})}, \; H^{(l_{3},l_{2},l_{1},l_{0})}.\nonumber
\end{align}
Therefore we have
\begin{align}
&  Q^{(l_{0}^{o},l_{1}^{o},l_{2}^{o},l_{3}^{o})}(E)= Q^{(l_{1}^{o},l_{0}%
^{o},l_{3}^{o},l_{2}^{o})}(E)= Q^{(l_{2}^{o},l_{3}^{o},l_{0}^{o},l_{1}^{o}%
)}(E)= Q^{(l_{3}^{o},l_{2}^{o},l_{1}^{o},l_{0}^{o})}(E)\label{eq:gDQo}\\
&  =Q^{(l_{0},l_{1},l_{2},l_{3})}(E)= Q^{(l_{1},l_{0},l_{3},l_{2})} (E)=
Q^{(l_{2},l_{3},l_{0},l_{1})} (E)=Q^{(l_{3},l_{2},l_{1},l_{0})}(E) .\nonumber
\end{align}

We apply Theorem \ref{thm:Qrealdist} to Eqs.(\ref{eq:gDQe}, \ref{eq:gDQo}) and
immediately obtain
%to our case, $l_2=0$ and $l_0 - l_1 - l_3 +1 \geq 0$.

\begin{proposition}
\label{prop:llelo} Let $l_{3}=0$, $l_{0}, l_{1}, l_{2} \in{\mathbb{Z}}_{\geq
0}$, $l_{0} \geq l_{1} + l_{2} - 1 $ and $\tau \in i {\mathbb{R}}_{>0}$.

\begin{itemize}
\item[(i)] If $l_{0}+l_{1}+l_{2}$ is even, then the zeros of the spectral
polynomial $Q^{(l_{0}^{e},l_{1}^{e},l_{2}^{e},l_{3}^{e})} (E)$ are all real
and unequal, where $l_{0}^{e},l_{1}^{e},l_{2}^{e},l_{3}^{e}$ are defined by
Eq.(\ref{def:lie}).

\item[(ii)] If $l_{0}+l_{1}+l_{2}$ is odd, then the zeros of the spectral
polynomial $Q^{(l_{0}^{o},l_{1}^{o},l_{2}^{o},l_{3}^{o})} (E)$ are all real
and unequal, where $l_{0}^{o},l_{1}^{o},l_{2}^{o},l_{3}^{o}$ are defined by
Eq.(\ref{def:lio}).
\end{itemize}
\end{proposition}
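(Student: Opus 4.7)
The plan is to derive the proposition directly from Theorem \ref{thm:Qrealdist} by transporting it across the isomonodromic identities \eqref{eq:gDQe} and \eqref{eq:gDQo}. The hypotheses in the statement, namely $l_3=0$, $l_0,l_1,l_2\in\mathbb{Z}_{\geq 0}$, $l_0\geq l_1+l_2-1$, and $\tau\in i\mathbb{R}_{>0}$, are exactly those of Theorem \ref{thm:Qrealdist} applied to the quadruple $(l_0,l_1,l_2,0)$. That theorem therefore guarantees that every root of $Q^{(l_0,l_1,l_2,0)}(E)$ is real and simple; this will be the base case that gets pushed through the isomonodromy.

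For case (i), the sum $l_0+l_1+l_2+l_3=l_0+l_1+l_2$ is even, so the quadruple $(l_0,l_1,l_2,0)$ sits in the eight-member isomonodromic family \eqref{eq:gDHe}. Substituting $l_3=0$ into \eqref{def:lie} produces precisely the integers $l_0^e,l_1^e,l_2^e,l_3^e$ named in the proposition, and identity \eqref{eq:gDQe} then reads $Q^{(l_0^e,l_1^e,l_2^e,l_3^e)}(E)=Q^{(l_0,l_1,l_2,0)}(E)$; combining with the base case yields the claim. Case (ii) is handled identically with $l_0+l_1+l_2$ now odd, invoking \eqref{def:lio} and \eqref{eq:gDQo} in place of \eqref{def:lie} and \eqref{eq:gDQe}, which delivers $Q^{(l_0^o,l_1^o,l_2^o,l_3^o)}(E)=Q^{(l_0,l_1,l_2,0)}(E)$.

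Strictly speaking, no step is hard; the proposition is a corollary of two already-proved results. The only item requiring attention is the labeling in \eqref{def:lie} and \eqref{def:lio}: the integers $l_i^e$ and $l_i^o$ produced there may be negative, but this is harmless because the Hamiltonian \eqref{Ino}, and hence the spectral polynomial, is unchanged under $l_k\mapsto -l_k-1$, as noted at the beginning of Section 3. Thus no reinterpretation of Theorem \ref{thm:Qrealdist} is needed, and the whole argument reduces to two applications of \eqref{eq:gDQe}/\eqref{eq:gDQo} followed by one invocation of the base case.
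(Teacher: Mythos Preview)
Your proposal is correct and matches the paper's argument essentially verbatim: the paper states Proposition~\ref{prop:llelo} as an immediate consequence of Theorem~\ref{thm:Qrealdist} combined with the isomonodromic identities \eqref{eq:gDQe} and \eqref{eq:gDQo}, which is exactly what you do. Your additional remark about possible negativity of the transformed indices being absorbed by the $l_k\mapsto -l_k-1$ symmetry is a reasonable clarification, though the paper does not make it explicit at this point.
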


In order to obtain the conditions (ii)-(iii) in Theorem \ref{thm1}, we need to
investigate what conditions $l_{0}^{e},l_{1}^{e},l_{2}^{e},l_{3}^{e}$ and
$l_{0}^{o},l_{1}^{o},l_{2}^{o},l_{3}^{o}$ satisfy in Proposition
\ref{prop:llelo}. We assume that $l_{3}=0$, $l_{0}, l_{1}, l_{2}
\in{\mathbb{Z}}_{\geq0}$ and $l_{0} \geq l_{1} + l_{2} - 1 $.

If $l_{0} +l_{1} +l_{2}$ is even, then $l_{0}\geq l_{1}+l_{2}$ and it follows
from $l_{3}=0$ and (\ref{def:liecon}) that $l_{0}^{e} -l_{1}^{e} - l_{2}^{e} +
l_{3}^{e} = - 1$, i.e.
\begin{equation}
\label{eqe2}l_{0}^{e} + l_{3}^{e} + 1 = l_{1}^{e} +l_{2}^{e}.
\end{equation}
It follows from $l_{0} \geq0$, $l_{1} \geq0$, $l_{2} \geq0$ and $l_{0} - l_{1}
- l_{2} \geq0 $ that
\begin{align}
\label{eqe3} &  l_{0}^{e} + l_{1}^{e} + l_{2}^{e} + l_{3}^{e} +1 \geq0, \quad
l_{0}^{e} + l_{1}^{e} - l_{2}^{e} - l_{3}^{e} +1 \leq0 ,\\
&  l_{0}^{e} - l_{1}^{e} + l_{2}^{e} - l_{3}^{e} +1 \leq0, \quad l_{0}^{e}=
(l_{0}-l_{1}-l_{2})/2-1 \geq-1,\nonumber
\end{align}
namely
\begin{align}
&  l_{0}^{e} + l_{1}^{e} + l_{2}^{e} + l_{3}^{e} \geq-1, \quad l_{2}^{e} +
l_{3}^{e} \geq l_{0}^{e} + l_{1}^{e} +1,\\
&  l_{1}^{e} + l_{3}^{e} \geq l_{0}^{e} + l_{2}^{e} +1 , \quad l_{0}^{e}
\geq-1 .\nonumber
\end{align}
%we set
%\begin{align}
%& l_0^e= (l_0-l_1-l_3)/2 -1, \quad l_1^e= (l_0-l_1+l_3)/2, \label{def:lie} \\
%& l_2^e= (l_0+l_1+l_3)/2, \quad l_3^e= (l_0+l_1-l_3)/2. \nonumber
%\end{align}
%Note that $l_0^e + l_2^e +1 = l_1^e + l_3^e  (=l_0  )$.
%and $l_2^e \geq l_0^e$.
Together with (\ref{eqe2}), we easily obtain
\begin{align}
&  2 l_{1}^{e} + l_{2}^{e} = l_{1}^{e} + l_{0}^{e} + l_{3}^{e} +1 \geq2
l_{0}^{e} + l_{2}^{e} +2,\\
&  2 l_{2}^{e} + l_{1}^{e} = l_{2}^{e} + l_{0}^{e} + l_{3}^{e} +1 \geq2
l_{0}^{e} + l_{1}^{e} +2,\nonumber \\
&  l_{1}^{e} + l_{2}^{e} +2 l_{3}^{e} \geq2 l_{0}^{e} + l_{1}^{e} + l_{2}^{e}
+2 ,\nonumber
\end{align}
so
\[
l_{1}^{e} \geq l_{0}^{e} +1 \geq0 , \; l_{2}^{e} \geq l_{0}^{e} +1 \geq0 , \;
l_{3}^{e} \geq l_{0}^{e} +1 \geq0 .
\]
Recall $l_{0}\geq l_{1}+l_{2}$. If $l_{0} >l_{1} +l_{2} +2 $, then
(\ref{eqe3}) gives $l_{0}^{e} > 0$ and so $l_{1}^{e} ,l_{2}^{e} , l_{3}^{e} >
0 $. If $l_{0} = l_{1} + l_{2} +2$, then $l_{0}^{e} = 0$ and $l_{1}^{e}
,l_{2}^{e} , l_{3}^{e} > 0 $. If $l_{0} = l_{1} + l_{2} $, then $l_{0}^{e} =
-1$ and
\[
l_{1}^{e}= \frac{l_{0}-l_{1}+l_{2}}{2} =l_{2},\; l_{2}^{e}= \frac{l_{0}%
+l_{1}-l_{2}}{2} =l_{1},\; l_{3}^{e}= \frac{l_{0}+l_{1}+l_{2}}{2} =l_{0}.
\]
In this case,
\[
H^{(l_{0}^{e},l_{1}^{e},l_{2}^{e},l_{3}^{e})} = H^{(-1,l_{2},l_{1},l_{0})}=
H^{(0,l_{2},l_{1},l_{0})}= H^{(l_{3},l_{2},l_{1},l_{0})},
\]
i.e. the operator $H^{(l_{0}^{e},l_{1}^{e},l_{2}^{e},l_{3}^{e})} $ coincides
with $H^{(l_{0},l_{1},l_{2},l_{3})} $ by shifting the variable as $x \to
x-\omega_{3}/2$.

Therefore, by rewriting $l_{0}^{e}, l_{1}^{e}, l_{2}^{e} , l_{3}^{e}$ by
$l_{0}, l_{1}, l_{2} , l_{3}$ and ignoring the case $l_{0}=-1$, we have the
following result.

\begin{theorem}
\label{thm2} Assume that $l_{0}, l_{1}, l_{2} , l_{3} \in{\mathbb{Z}}_{\geq0}%
$, $l_{0} +l_{3} +1 =l_{1} +l_{2} $, $l_{2} + l_{3} \geq l_{0} + l_{1} +1$,
$l_{1} + l_{3} \geq l_{0} + l_{2} +1$ and $\tau \in i {\mathbb{R}}_{>0}$. Then
the zeros of the spectral polynomial $Q^{(l_{0},l_{1},l_{2},l_{3})}(E)$ are
all real and unequal.
\end{theorem}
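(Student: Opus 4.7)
The plan is to obtain Theorem~\ref{thm2} as a direct re-labelling of Proposition~\ref{prop:llelo}(i), using the isomonodromy identity~(\ref{eq:gDQe}). In the paragraphs immediately preceding the theorem statement the authors have already shown that any tuple $(L_0,L_1,L_2,0)$ satisfying $L_3=0$, $L_0\geq L_1+L_2$, and $L_0+L_1+L_2$ even (the even case of condition (i) of Theorem~\ref{thm1}) has its $e$-image $(L_0^e,L_1^e,L_2^e,L_3^e)$ satisfying exactly the three conditions $l_0+l_3+1=l_1+l_2$, $l_2+l_3\geq l_0+l_1+1$, and $l_1+l_3\geq l_0+l_2+1$. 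So it suffices to show that, conversely, every tuple meeting the hypothesis of Theorem~\ref{thm2} arises as such an image.

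Concretely, I invert the linear system~(\ref{def:liecon}) imposing $L_3=0$ to define
\[
L_0=\tfrac{l_0+l_1+l_2+l_3+1}{2},\quad L_1=\tfrac{l_2+l_3-l_0-l_1-1}{2},\quad L_2=\tfrac{l_1+l_3-l_0-l_2-1}{2},\quad L_3=0.
\]
Consistency of this system (specifically $L_3=0$) is equivalent to the balance relation $l_0+l_3+1=l_1+l_2$, which also ensures the four numbers are integers; the two remaining inequalities in the hypothesis force $L_1,L_2\in\mathbb{Z}_{\geq 0}$; and substituting $l_1+l_2=l_0+l_3+1$ simplifies $L_0$ to $l_0+l_3+1\geq 1$.

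To close the argument I verify the remaining ingredients: $L_0-(L_1+L_2)=2l_0+2\geq 2$, hence $L_0\geq L_1+L_2-1$; and $L_0+L_1+L_2=2l_3$ is even, so we land in the even branch of Proposition~\ref{prop:llelo}(i). That proposition then asserts the zeros of $Q^{(L_0^e,L_1^e,L_2^e,L_3^e)}(E)=Q^{(l_0,l_1,l_2,l_3)}(E)$ are real and unequal, which is precisely the conclusion of Theorem~\ref{thm2}. I expect no substantive obstacle: the heart of the argument lies in the already-established Sturm sequence computation (Theorem~\ref{thm:Qrealdist}) and the Darboux-type isomonodromy~(\ref{eq:gDQe}) from \cite{Tak5}; the only care needed here is the bookkeeping of parity and signs when inverting~(\ref{def:lie}).
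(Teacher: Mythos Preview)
Your proposal is correct and follows essentially the same approach as the paper: both arguments reduce Theorem~\ref{thm2} to Proposition~\ref{prop:llelo}(i) via the $e$-transform~(\ref{def:lie}) and the isomonodromy identity~(\ref{eq:gDQe}). The paper computes the forward image of the map $(L_0,L_1,L_2,0)\mapsto(L_0^e,L_1^e,L_2^e,L_3^e)$ and then simply ``rewrites'' the superscripts; you make this rigorous by explicitly inverting the linear system, verifying that every tuple satisfying the hypotheses of Theorem~\ref{thm2} indeed arises as an $e$-image of some admissible $(L_0,L_1,L_2,0)$ with $L_0\geq L_1+L_2$ and even parity --- a small but genuine improvement in clarity.
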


Now we turn to the case that $l_{0} +l_{1} +l_{2}$ is odd. Then it follows
from $l_{3}=0$ and (\ref{def:liocon}) that $l_{0}^{o} - l_{1}^{o} - l_{2}^{o}
+ l_{3}^{o} = 1$, i.e.
\begin{equation}
\label{eqe4}l_{0}^{o} + l_{3}^{o} = l_{1}^{o} +l_{2}^{o} +1.
\end{equation}
It follows from $l_{0} \geq0$, $l_{1} \geq0$, $l_{2} \geq0$ and $l_{0} - l_{1}
- l_{2} +1 \geq0 $ that
\begin{align}
\label{eqe5} &  l_{0}^{o} + l_{1}^{o} + l_{2}^{o} + l_{3}^{o} +1 \geq0, \quad
l_{0}^{o} + l_{1}^{o} - l_{2}^{o} - l_{3}^{o} -1 \geq0 ,\\
&  l_{0}^{o} - l_{1}^{o} + l_{2}^{o} - l_{3}^{o} -1 \geq0 , \quad l_{3}^{o}=
(l_{0}-l_{1}-l_{2}-1)/2 \geq-1,\nonumber
\end{align}
namely{\allowdisplaybreaks
\begin{align}
&  l_{0}^{o} + l_{1}^{o} + l_{2}^{o} + l_{3}^{o} \geq-1, \quad l_{0}^{o} +
l_{1}^{o} \geq l_{2}^{o} + l_{3}^{o} +1,\\
&  l_{0}^{o} + l_{2}^{o} \geq l_{1}^{o} + l_{3}^{o} +1 ,\quad l_{3}^{o} \geq-1
.\nonumber
\end{align}
}Together with (\ref{eqe4}), we obtain{\allowdisplaybreaks
\begin{align}
&  2 l_{1}^{o} + l_{2}^{o} = l_{1}^{o} + l_{0}^{o} + l_{3}^{o} -1 \geq
l_{2}^{o} + 2l_{3}^{o},\\
&  2 l_{2}^{o} + l_{1}^{o} = l_{2}^{o} + l_{0}^{o} + l_{3}^{o} -1 \geq2
l_{3}^{o} + l_{1}^{o} ,\nonumber \\
&  l_{1}^{o} + 2 l_{0}^{o} +l_{2}^{o} \geq l_{1}^{o} + l_{2}^{o} + 2l_{3}^{o}
+2 ,\nonumber
\end{align}
}so
\begin{align}
l_{0}^{o} \geq l_{3}^{o} +1 \geq0 , \; l_{1}^{o} \geq l_{3}^{o} \geq-1 , \;
l_{2}^{o} \geq l_{3}^{o} \geq-1 .
\end{align}
Recall $l_{0} \geq l_{1} + l_{2} - 1 $. If $l_{0} >l_{1} +l_{2} +1 $, then
(\ref{eqe5}) gives $l_{3}^{o} > 0$ and so $l_{0}^{o} ,l_{1}^{o} , l_{2}^{o} >
0 $. If $l_{0} = l_{1} + l_{2} +1$, then $l_{3}^{o} = 0=l_{3}$ and $l_{0}^{o}=
(l_{0}+l_{1}+l_{2}+1)/2=l_{0}$, $l_{1}^{o}= (l_{0}+l_{1}-l_{2}-1)/2= l_{1}$,
$l_{2}^{o}= (l_{0}-l_{1}+l_{2}-1)/2 = l_{2}$, i.e. $H^{(l_{0}^{o}, l_{1}^{o},
l_{2}^{o} , l_{3}^{o})}= H^{(l_{0},l_{1},l_{2},l_{3})}$. It remains to
consider the case $l_{0} = l_{1} + l_{2} -1$. Then $l_{3}^{o} = -1$ and
$l_{0}^{o}=l_{0} +1$, $l_{1}^{o}= l_{1} -1$, $l_{2}^{o}= l_{2} -1$, i.e.
$H^{(l_{0}^{o}, l_{1}^{o}, l_{2}^{o} , l_{3}^{o})}= H^{(l_{0} +1,l_{1}-1
,l_{2}-1,-1)}$. If $l_{1} >0$ and $l_{2} >0$, then
\[
H^{(l_{0}^{o}, l_{1}^{o}, l_{2}^{o} , l_{3}^{o})}=H^{(l_{0} +1,l_{1}-1
,l_{2}-1,0)}%
\]
reduces to the case in Theorem \ref{thm:Qrealdist}; if $l_{1}=0$, then
\[
H^{(l_{0}^{o}, l_{1}^{o}, l_{2}^{o} , l_{3}^{o})}= H^{(l_{0} +1,-1
,l_{2}-1,-1)}=H^{(l_{2},0 ,l_{0},0)}%
\]
reduces to the case in Theorem \ref{thm:Qrealdist} too; if $l_{2}=0$, then
\[
H^{(l_{0}^{o}, l_{1}^{o}, l_{2}^{o} , l_{3}^{o})}= H^{(l_{0} +1,l_{1}-1
,-1,-1)}=H^{(l_{1},l_{0},0,0)}%
\]
reduces to the case in Theorem \ref{thm:Qrealdist} or Theorem B.

Therefore, by rewriting $l_{0}^{o}, l_{1}^{o}, l_{2}^{o} , l_{3}^{o}$ by
$l_{0}, l_{1}, l_{2} , l_{3}$ and ignoring the case $l_{0} = l_{1} + l_{3}
\pm1 $, we have the following result.

\begin{theorem}
\label{thm3} Assume that $l_{0}, l_{1}, l_{2} , l_{3} \in{\mathbb{Z}}_{\geq0}%
$, $l_{0} +l_{3} =l_{1} +l_{2} +1$, $l_{0} + l_{1} \geq l_{2} + l_{3} +1$,
$l_{0} + l_{2} \geq l_{1} + l_{3} +1 $ and $\tau \in i {\mathbb{R}}_{>0}$. Then
the zeros of the spectral polynomial $Q^{(l_{0},l_{1},l_{2},l_{3})}(E)$ are
all real and unequal.
\end{theorem}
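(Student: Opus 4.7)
The plan is to derive Theorem \ref{thm3} from Theorem \ref{thm:Qrealdist} by reversing the odd-parity generalized Darboux transformation (\ref{def:lio})--(\ref{eq:gDQo}). The analysis performed just above Theorem \ref{thm3} already established the forward implication: starting from parameters $(l_0,l_1,l_2,l_3)$ with $l_3=0$, $l_0\ge l_1+l_2-1$ and $l_0+l_1+l_2$ odd, the transformed quadruple $(l_0^o,l_1^o,l_2^o,l_3^o)$ satisfies exactly the three arithmetic conditions that appear in the statement of Theorem \ref{thm3}. Since the linear map defined by (\ref{def:lio}) is an involution, as is visible from the identities (\ref{def:liocon}), the same analysis can simply be run backwards.

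Concretely, given $(l_0,l_1,l_2,l_3)$ satisfying the hypotheses of Theorem \ref{thm3}, I would set
\[
m_0 := l_1+l_2+1,\quad m_1 := \tfrac{l_0+l_1-l_2-l_3-1}{2},\quad m_2 := \tfrac{l_0-l_1+l_2-l_3-1}{2},\quad m_3 := 0,
\]
and verify that $(m_0,m_1,m_2,m_3)$ fulfils the hypotheses needed to apply Proposition \ref{prop:llelo}(ii). The relation $l_0+l_3=l_1+l_2+1$ makes $l_0+l_1+l_2+l_3$ odd, so each $m_i$ is an integer; the two inequalities $l_0+l_1\ge l_2+l_3+1$ and $l_0+l_2\ge l_1+l_3+1$ force $m_1,m_2\ge 0$, while $m_0\ge 0$ and $m_3=0$ are built in. A short calculation gives $m_0-(m_1+m_2)=2l_3+1\ge 1$, so $m_0\ge m_1+m_2-1$, and $m_0+m_1+m_2=2l_0-1$ is odd. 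Hence the hypotheses of Theorem \ref{thm:Qrealdist} are met for the auxiliary parameters $(m_0,m_1,m_2,m_3)$.

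Proposition \ref{prop:llelo}(ii), applied to $(m_0,m_1,m_2,m_3)$, then yields that the zeros of $Q^{(m_0^o,m_1^o,m_2^o,m_3^o)}(E)$ are real and distinct. The last step is to verify via the explicit formulas (\ref{def:lio}), using only the identity $l_0+l_3=l_1+l_2+1$, that $(m_0^o,m_1^o,m_2^o,m_3^o)=(l_0,l_1,l_2,l_3)$; combined with the isomonodromy identity (\ref{eq:gDQo}) this gives $Q^{(l_0,l_1,l_2,l_3)}(E)=Q^{(m_0^o,m_1^o,m_2^o,m_3^o)}(E)$ and closes the argument. The main, and essentially only, obstacle is the sign and parity bookkeeping for the auxiliary parameters; the degenerate boundary configurations in which some $m_i$ might become negative are precluded by the strict form of the three inequalities of Theorem \ref{thm3}, so no additional case analysis beyond what Theorem \ref{thm:Qrealdist} already handles is required.
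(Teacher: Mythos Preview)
Your proof is correct and uses essentially the same mechanism as the paper: reduce to Theorem~\ref{thm:Qrealdist} via the odd-parity isomonodromy identity (\ref{eq:gDQo}) (equivalently, Proposition~\ref{prop:llelo}(ii)). The only difference is organizational: the paper runs the transformation (\ref{def:lio}) \emph{forward} from a quadruple with $l_3=0$, derives the constraints (\ref{eqe4})--(\ref{eqe5}), and then ``rewrites'' $(l_0^o,l_1^o,l_2^o,l_3^o)$ as $(l_0,l_1,l_2,l_3)$; you instead invert the map explicitly by writing down $(m_0,m_1,m_2,m_3)$ and checking its image is $(l_0,l_1,l_2,l_3)$. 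Your direction is arguably cleaner, since it proves the theorem directly from its hypotheses and makes transparent why no boundary case analysis is needed (the identity $m_0-m_1-m_2=2l_3+1\ge 1$ keeps you away from the degenerate case $m_0=m_1+m_2-1$ that the paper has to discuss separately). One minor redundancy: once you have shown $(m_0^o,m_1^o,m_2^o,m_3^o)=(l_0,l_1,l_2,l_3)$, the appeal to (\ref{eq:gDQo}) is unnecessary---the two polynomials are literally the same.
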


We are in the position to prove Theorem \ref{thm1}.

\begin{proof}
[Proof of Theorem \ref{thm1}]Theorem \ref{thm1} follows directly from Theorems
\ref{thm:Qrealdist}, \ref{thm2} and \ref{thm3}.
\end{proof}

\section{From the viewpoint of the monodromy data}

\label{RTS-PVI}

The purpose of this and next sections is to prove Theorem \ref{THM3} from the
viewpoint of the monodromy data of GLE (\ref{GLE-2}). There are two ways to
discuss the monodromy of GLE (\ref{GLE-2}). One way is to project GLE
(\ref{GLE-2}) to a new equation on $\mathbb{CP}^{1}$ via $x=\wp(z)$, which is
a second order Fuchsian equation with five singular points $\{e_{1}%
,e_{2},e_{3},\wp(p),\infty \}$ with $\wp(p)$ being apparent. Since a vast
literature has been devoted to studying second order linear ODEs defined on
$\mathbb{CP}^{1}$, we could apply some known theories to this new ODE.
However, the explicit formulas for the monodromy of this new ODE are not easy
to compute. Therefore, it is more convenient for us to calculate the monodromy
of GLE (\ref{GLE-2}) on the torus $E_{\tau}$ directly.

The monodromy representation of GLE (\ref{GLE-2}) is a homomorphism $\rho
:\pi_{1}\left(  E_{\tau}\backslash(E_{\tau}[2]\cup(\left \{  \pm p\right \}
+\Lambda_{\tau}),q_{0}\right)  \rightarrow SL(2,\mathbb{C})$, where
$q_{0}\not \in E_{\tau}[2]\cup(\left \{  \pm p\right \}  +\Lambda_{\tau})$ is a
base point. Let $\gamma_{\pm}\in \pi_{1}\left(  E_{\tau}\backslash(E_{\tau
}[2]\cup(\left \{  \pm p\right \}  +\Lambda_{\tau})),q_{0}\right)  $ be a simple
loop encircling $\pm p$ counterclockwise respectively, and $\ell_{j}$,
$j=1,2$, be two fundamental cycles of $E_{\tau}$ connecting $q_{0}$ with
$q_{0}+\omega_{j}$ such that $\ell_{j}$ does not intersect with $L+\Lambda
_{\tau}$ (here $L$ is the straight segment connecting $\pm p$) and satisfies%
\begin{equation}
\gamma_{+}\gamma_{-}=\ell_{1}\ell_{2}\ell_{1}^{-1}\ell_{2}^{-1}\text{ in }%
\pi_{1}\left(  E_{\tau}\backslash(\left \{  \pm p\right \}  +\Lambda_{\tau
}),q_{0}\right)  . \label{II-iv}%
\end{equation}
Since the local exponents of (\ref{GLE-2}) at $\pm p$ are $-\frac{1}{2}$ and
$\frac{3}{2}$ and $\pm p\not \in E_{\tau}[2]$ are apparent singularities, we
always have%
\begin{equation}
\rho(\gamma_{\pm})=-I_{2}. \label{89-2}%
\end{equation}
For any $k\in \{0,1,2,3\}$, the local exponents of GLE (\ref{GLE-2}) at
$\omega_{k}/2$ are $-l_{k}$ and $l_{k}+1$ with $l_{k}\in \mathbb{N}\cup \{0\}$.
Since the potential $I^{(l_{0},l_{1},l_{2},l_{3})}(z;p,\tau)$ is even
elliptic, the local monodromy matrix of GLE (\ref{GLE-2}) at $\omega_{k}/2$ is
$I_{2}$ (see e.g. \cite[Lemma 2.2]{Takemura}). Therefore, the monodromy group
of GLE (\ref{GLE-2}) is generated by $\{-I_{2},\rho(\ell_{1}),\rho(\ell
_{2})\}$. Together with (\ref{II-iv}) and (\ref{89-2}), we immediately obtain
$\rho(\ell_{1})\rho(\ell_{2})=\rho(\ell_{2})\rho(\ell_{1})$, which implies
that the monodromy group of GLE (\ref{GLE-2}) is always \emph{abelian} and
hence \emph{reducible}, i.e. \emph{all the monodromy matrices have at least a
common eigenfunction}. Clearly there are two cases:

\begin{itemize}
\item[(a)] Completely reducible, i.e. all the monodromy matrices have two
linearly independent common eigenfunctions: Up to a common conjugation,
$\rho(\ell_{1})$ and $\rho(\ell_{2})$ can be diagonalized as%
\[
\rho(\ell_{1})=%
\begin{pmatrix}
e^{-2\pi is} & 0\\
0 & e^{2\pi is}%
\end{pmatrix}
,\text{ \  \  \ }\rho(\ell_{2})=%
\begin{pmatrix}
e^{2\pi ir} & 0\\
0 & e^{-2\pi ir}%
\end{pmatrix}
\]
for some $(r,s)\in \mathbb{C}^{2}\backslash \frac{1}{2}\mathbb{Z}^{2}$.
Moreover, by using these two common eigenfunctions, we could express their
Wronskian $W$ in terms of $(A,p)$ as $W^{2}=\mathcal{Q}^{(l_{0},l_{1}%
,l_{2},l_{3})}(A;p,\tau)$, where $\mathcal{Q}^{(l_{0},l_{1},l_{2},l_{3}%
)}(A;p,\tau)$ is precisely the polynomial introduced in Section 1; see
\cite{CKL4}. So this case occurs if and only if $\mathcal{Q}^{(l_{0}%
,l_{1},l_{2},l_{3})}(A;p,\tau)\not =0$. This procedure is to use the second
symmetric product of GLE (\ref{GLE-2}) and has become well known in the
literature; see \cite{Whittaker-Watson,Takemura,Tak1} for example.

\item[(b)] Not completely reducible, i.e. the space of common eigenfunctions
is of dimension $1$: Up to a common conjugation,%
\begin{equation}
\rho(\ell_{1})=\varepsilon_{1}%
\begin{pmatrix}
1 & 0\\
1 & 1
\end{pmatrix}
,\text{ \  \  \ }\rho(\ell_{2})=\varepsilon_{2}%
\begin{pmatrix}
1 & 0\\
C & 1
\end{pmatrix}
, \label{Mono-2}%
\end{equation}
where $\varepsilon_{1},\varepsilon_{2}\in \{ \pm1\}$ and $C\in \mathbb{C}\cup \{
\infty \}$. Remark that if $C=\infty$, then (\ref{Mono-2}) should be understood
as%
\[
\rho(\ell_{1})=\varepsilon_{1}%
\begin{pmatrix}
1 & 0\\
0 & 1
\end{pmatrix}
,\text{ \  \  \ }\rho(\ell_{2})=\varepsilon_{2}%
\begin{pmatrix}
1 & 0\\
1 & 1
\end{pmatrix}
.
\]
In this case, $C$ is called the \emph{monodromy data} of GLE (\ref{GLE-2}).
Clearly this case occurs if and only if $\mathcal{Q}^{(l_{0},l_{1},l_{2}%
,l_{3})}(A;p,\tau)=0$.
\end{itemize}

\begin{remark}
As discussed before, GLE (\ref{GLE-2}) can be projected to a new Fuchsian ODE
on $\mathbb{CP}^{1}$. Then the monodromy representation of this new ODE is
irreducible if and only if Case (a) occurs, and reducible if and only if Case
(b) occurs. Most of the references in the literature are devoted to the case
of irreducible representation on $\mathbb{CP}^{1}$, but very few are devoted
to studying reducible representation.
\end{remark}

From now on, we consider the special case $(l_{0},l_{1},l_{2},l_{3}%
)=(1,0,0,0)$ and denote $\mathcal{Q}(A;p,\tau)=\mathcal{Q}^{(1,0,0,0)}%
(A;p,\tau)$ for convenience. Then GLE (\ref{GLE-2}) becomes%
\begin{equation}
y^{\prime \prime}(z)=\left[
\begin{array}
[c]{l}%
2\wp(z)+\frac{3}{4}(\wp(z+p)+\wp(z-p))\\
+A(\zeta(z+p)-\zeta(z-p))+B
\end{array}
\right]  y(z)\text{ \ in }E_{\tau}. \label{89-1}%
\end{equation}
Now suppose $\mathcal{Q}(A;p,\tau)=0$. Then as explained above, Case (b)
occurs with some monodromy data $C\in \mathbb{C}\cup \{ \infty \}$. The following
result is to express $\wp(p|\tau)$ in terms of this $C$.\medskip

\noindent \textbf{Theorem 4.A. }\cite{CKL3} \emph{Fix }$\tau \in \mathbb{H}%
$\emph{ and }$p\not \in E_{\tau}[2]$\emph{.}

\begin{itemize}
\item[(1)] \emph{If the monodromy of GLE (\ref{89-1}) is not completely
reducible (i.e. }$\mathcal{Q}(A;p,\tau)=0$\emph{)}, \emph{then the monodromy
data }$C$\emph{ satisfies either}%
\begin{equation}
\wp(p|\tau)=\frac{-4(C\eta_{1}-\eta_{2})^{3}-g_{2}(C\eta_{1}-\eta_{2}%
)(C-\tau)^{2}+2g_{3}(C-\tau)^{3}}{(C-\tau)[12(C\eta_{1}-\eta_{2})^{2}%
-g_{2}(C-\tau)^{2}]}, \label{III-17}%
\end{equation}
\emph{with} $(\varepsilon_{1},\varepsilon_{2})=(1,1)$ \emph{or}%
\begin{equation}
\wp(p|\tau)=\frac{(\frac{g_{2}}{2}-3e_{k}^{2})(C\eta_{1}-\eta_{2})+\frac
{g_{2}}{4}e_{k}(C-\tau)}{3e_{k}(C\eta_{1}-\eta_{2})+(\frac{g_{2}}{2}%
-3e_{k}^{2})(C-\tau)}, \label{III-18}%
\end{equation}
\emph{for some }$k\in \{1,2,3\}$\emph{ with}%
\begin{equation}
(\varepsilon_{1},\varepsilon_{2})=\left \{
\begin{array}
[c]{l}%
(1,-1)\text{ \ if \ }k=1,\\
(-1,1)\text{ \ if \ }k=2,\\
(-1,-1)\text{ \ if \ }k=3.
\end{array}
\right.  \label{III-19}%
\end{equation}
\emph{Conversely, the following holds.}

\item[(2-i)] \emph{If }$C\in \mathbb{C}\cup \{ \infty \}$\emph{ satisfies the
cubic equation (\ref{III-17})}, \emph{then there exists }$A\in \mathbb{C}%
$\emph{ (and }$B$\emph{ is given by }$(p,A)$\emph{ via (\ref{i60}) with
}$(l_{0},l_{1},l_{2},l_{3})=(1,0,0,0)$\emph{) such that for the corresponding
GLE (\ref{89-1}), up to a common conjugation,}%
\[
\rho(\ell_{1})=%
\begin{pmatrix}
1 & 0\\
1 & 1
\end{pmatrix}
,\text{ \  \  \ }\rho(\ell_{2})=%
\begin{pmatrix}
1 & 0\\
C & 1
\end{pmatrix}
.
\]

\item[(2-ii)] \emph{Fix }$k\in \{1,2,3\}$\emph{. If }$C\in \mathbb{C}\cup \{
\infty \}$\emph{ satisfies the equation (\ref{III-18}), then there exists
}$A\in \mathbb{C}$\emph{ such that for the corresponding GLE (\ref{89-1}), up
to a common conjugation,}%
\[
\rho(\ell_{1})=\varepsilon_{1}%
\begin{pmatrix}
1 & 0\\
1 & 1
\end{pmatrix}
,\text{ \  \  \ }\rho(\ell_{2})=\varepsilon_{2}%
\begin{pmatrix}
1 & 0\\
C & 1
\end{pmatrix}
,
\]
\emph{where }$(\varepsilon_{1},\varepsilon_{2})$ \emph{is given by
(\ref{III-19})}.
\end{itemize}

\begin{remark}
The formulas (\ref{III-17})-(\ref{III-18}) first appeared in
\cite[(3.68)-(3.69)]{Takemura} without detailed proofs and later was obtained
in \cite{CKL2} independently, as explicit expressions of Riccati type
solutions of Painlev\'{e} VI equation. But their connection with the monodromy
data seems not be well addressed. The assertion (1) can be proved directly
without applying Painlev\'{e} VI equation. However, the converse part of the
assertion (1), i.e. the assertion (2), is very delicate. Recall $\deg
_{A}\mathcal{Q}=6$. If $\mathcal{Q}(A;p,\tau)$ has six distinct roots, then
the assertion (2) may follow from the assertion (1) and the $1$-$1$
correspondence $A\mapsto(\varepsilon_{1},\varepsilon_{2},C)$. However, if
$\mathcal{Q}(A;p,\tau)$ has multiple roots, then it is not clear whether any
$C$ satisfying either (\ref{III-17}) or (\ref{III-18}) is the monodromy data
of some GLE (\ref{89-1}) or not. Without knowing the root structure of
$\mathcal{Q}(A;p,\tau)$, the proof of the assertion (2) is to apply the
connection between GLE (\ref{89-1}) and Painlev\'{e} VI equation. Since the
proof of Theorem 4.A is long and has nothing related to Theorem \ref{THM3}, we
refer the proof to \cite{CKL3}.
\end{remark}

How to apply Theorem 4.A to obtain Theorem \ref{THM3}? For $k\in \{1,2,3\}$, we
denote $C_{k}$ to be the unique solution of equation (\ref{III-18}). Then by
Theorem 4.A, there exists $A_{k}\in \mathbb{C}$ such that the monodromy representation
of the corresponding GLE (\ref{89-1}) is not completely reducible, i.e.
$\mathcal{Q}(A_{k};p,\tau)=0$. Similarly, we let $C_{j}$, $j\in \{4,5,6\}$ be
the three solutions of the cubic equation\emph{ }(\ref{III-17}). Again by
Theorem 4.A, there exists $A_{j}\in \mathbb{C}$ such that $\mathcal{Q}(A_{j};p,\tau
)=0$. Clearly%
\[
A_{k}\not =A_{j}\text{ \ for any }k\in \{1,2,3\} \text{ and }j\in \{4,5,6\},
\]%
\[
A_{k_{1}}\not =A_{k_{2}}\text{ for any }k_{1}\not =k_{2}\in \{1,2,3\},
\]
because the $(\varepsilon_{1},\varepsilon_{2})$'s of the corresponding GLEs
(\ref{89-1}) are different. Consequently, once we can prove that the cubic
equation\emph{ }(\ref{III-17}) has three distinct solutions, i.e.
\begin{equation}
C_{4}\not =C_{5}\not =C_{6}\not =C_{4}, \label{iv}%
\end{equation}
then we immediately obtain%
\[
A_{k}\not =A_{j}\text{ \ for any }k\not =\text{ }j\in \{1,2,3,4,5,6\},
\]
which will give all the roots of $\mathcal{Q}(A;p,\tau)=0$ because $\deg
_{A}\mathcal{Q}=6$. Therefore,

\begin{corollary}
Fix $\tau \in \mathbb{H}$ and $p\not \in E_{\tau}[2]$. Then $\mathcal{Q}%
(A;p,\tau)=0$ has six distinct roots if and only if the cubic equation\emph{
}(\ref{III-17}) has three distinct solutions.
\end{corollary}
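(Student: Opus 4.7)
The plan is to combine Theorem 4.A with the degree count $\deg_A \mathcal{Q} = 6$. First, for each $k \in \{1,2,3\}$, equation (\ref{III-18}) is linear in $C$ and has a unique solution $C_k \in \mathbb{C} \cup \{\infty\}$; by Theorem 4.A(2-ii) this $C_k$ arises as the monodromy datum of GLE (\ref{89-1}) for some accessory parameter $A_k \in \mathbb{C}$, with sign pair $(\varepsilon_1, \varepsilon_2)$ given by (\ref{III-19}). Separately, for each solution $C_j$ ($j=4,5,6$) of the cubic (\ref{III-17}), Theorem 4.A(2-i) produces $A_j \in \mathbb{C}$ whose GLE has sign pair $(1,1)$. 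All six $A_i$ satisfy $\mathcal{Q}(A_i;p,\tau) = 0$.

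The decisive ingredient is that $A$ determines the triple $(\varepsilon_1, \varepsilon_2, C)$: since $B$ is fixed by $(p,A)$ via (\ref{i60}), $A$ fully specifies the ODE (\ref{89-1}), hence its monodromy representation up to conjugation, hence the conjugation-invariant data $(\varepsilon_1, \varepsilon_2, C)$ read off from the normal form (\ref{Mono-2}). Consequently, the three $A_k$ for $k \in \{1,2,3\}$ have pairwise distinct sign pairs and are therefore pairwise distinct; likewise each such $A_k$ differs from every $A_j$ with $j \in \{4,5,6\}$ (sign pair $(1,1)$); within the $(1,1)$ family, $A_4, A_5, A_6$ are pairwise distinct if and only if $C_4, C_5, C_6$ are. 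Under the hypothesis that the cubic (\ref{III-17}) has three distinct roots, this produces six distinct roots of $\mathcal{Q}(\cdot;p,\tau)$, which exhaust the roots since $\deg_A \mathcal{Q} = 6$.

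For the converse, assume $\mathcal{Q}(\cdot;p,\tau)$ has six distinct roots. By Theorem 4.A(1), every such root $A$ yields monodromy data $(\varepsilon_1, \varepsilon_2, C)$ lying either on the cubic (\ref{III-17}) or on one of the three instances of (\ref{III-18}). Each of the three linear equations (\ref{III-18}) contributes at most one value of $A$ by the well-definedness argument above, so at most three of the six roots come from them; the remaining three must arise from the cubic, forcing (\ref{III-17}) to have three distinct solutions.

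There is no substantial computation involved; the only point requiring care is the well-definedness of the map $A \mapsto (\varepsilon_1, \varepsilon_2, C)$, which rests on (\ref{Mono-2}) being a genuine conjugation-normal form and on $A$ determining the ODE. With that in hand, the corollary reduces entirely to counting against the degree bound $\deg_A \mathcal{Q} = 6$, and this is where I expect any pushback to arise — making sure that the conjugacy class of the representation really is parametrised by the triple $(\varepsilon_1, \varepsilon_2, C)$, rather than by a coarser or finer invariant.
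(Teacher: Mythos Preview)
Your forward direction (cubic distinct $\Rightarrow$ $\mathcal{Q}$ has six distinct roots) is correct and follows precisely the argument the paper gives in the paragraph preceding the Corollary: produce $A_1,\dots,A_6$ via Theorem~4.A(2), distinguish them by their sign pairs $(\varepsilon_1,\varepsilon_2)$ and, within the $(1,1)$ family, by the distinctness of $C_4,C_5,C_6$; then invoke $\deg_A\mathcal{Q}=6$.

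Your converse, however, has a gap. You argue that each linear equation (\ref{III-18}) ``contributes at most one value of $A$'' and that the remaining three roots must give three distinct $C$'s on the cubic, both times citing the ``well-definedness argument above''. But what you established above is that $A$ determines the triple $(\varepsilon_1,\varepsilon_2,C)$ --- i.e.\ the map $A\mapsto(\varepsilon_1,\varepsilon_2,C)$ is a \emph{function}. What the converse needs is that this map is \emph{injective} on the roots of $\mathcal{Q}$: that the monodromy data $(\varepsilon_1,\varepsilon_2,C)$ in turn determines $A$. Without this, two distinct roots $A\neq A'$ could in principle share the same triple, and your counting breaks down in both steps. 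The paper does not spell out the converse either; the missing injectivity is exactly the ``uniqueness result of such GLE with respect to the monodromy group generators $\rho(\ell_1),\rho(\ell_2)$'' that the paper cites from \cite{CKL4} in the proof of Lemma~\ref{lemm}. You correctly sense in your final paragraph that this is the delicate point, but you have the direction of the concern reversed: the issue is not whether $(\varepsilon_1,\varepsilon_2,C)$ is too coarse to be determined by $A$, but whether it is fine enough to determine $A$.
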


In our case $\tau \in i\mathbb{R}_{>0}$ and $p\in(0,\frac{1}{2})\cup
(0,\frac{\tau}{2})$, we just need to prove (\ref{iv}) and $A_{j}\in \mathbb{R}$
for $p\in(0,\frac{1}{2})$ (resp. $A_{j}\in i\mathbb{R}$ for $p\in(0,\frac
{\tau}{2})$) for all $j$ to obtain Theorem \ref{THM3}. The full details will
be given in the next section.

\section{Proof of Theorem \ref{THM3}}

This section is devoted to the proof of Theorem \ref{THM3}. First we prove the
following result.

\begin{lemma}
\label{lemm}Let $\tau \in i\mathbb{R}_{>0}$ and $j\in \{1,2,3,4,5,6\}$. Then the
followings hold.

\begin{itemize}
\item[(1)] If $p\in(0,\frac{1}{2})$, then $C_{j}\in i\mathbb{R\cup \{ \infty
\}}$ if and only if $A_{j}\in \mathbb{R}$.

\item[(2)] If $p\in(0,\frac{\tau}{2})$, then $C_{j}\in i\mathbb{R\cup \{
\infty \}}$ if and only if $A_{j}\in i\mathbb{R}$.
\end{itemize}

\begin{proof}
Recall $C_{j}$ is the monodromy data of the GLE (\ref{89-1}):%
\begin{equation}
y^{\prime \prime}(z)=I(z)y(z)\text{ in }E_{\tau}, \label{GLE-3}%
\end{equation}
where%
\begin{equation}
I(z)=\left[
\begin{array}
[c]{l}%
2\wp(z|\tau)+\frac{3}{4}(\wp(z+p|\tau)+\wp(z-p|\tau))\\
+A_{j}(\zeta(z+p|\tau)-\zeta(z-p|\tau))+B_{j}%
\end{array}
\right]  , \label{GLE-30}%
\end{equation}
and $B_{j}$\emph{ }is given by $(p,A_{j})$ via (\ref{i60}) with\emph{ }%
$(l_{0},l_{1},l_{2},l_{3})=(1,0,0,0)$.

(1) Since $p\in(0,\frac{1}{2})$, we have $\bar{p}=p$. Let%
\[
I_{0}(z):=\left[
\begin{array}
[c]{l}%
2\wp(z|\tau)+\frac{3}{4}(\wp(z+p|\tau)+\wp(z-p|\tau))\\
+\bar{A}_{j}(\zeta(z+p|\tau)-\zeta(z-p|\tau))+\bar{B}_{j}%
\end{array}
\right]  .
\]
Since $\tau \in i\mathbb{R}_{>0}$, we easily obtain $\overline{I(\bar{z}%
)}=I_{0}(z)$. Consequently, if $y(z)$ is a local solution of GLE
(\ref{GLE-3})-(\ref{GLE-30}) in a small domain bounded away from $\pm
p+\Lambda_{\tau}$, then%
\begin{equation}
\tilde{y}(z):=\overline{y(\bar{z})} \label{eqe6}%
\end{equation}
is a local solution of GLE $y^{\prime \prime}=I_{0}(z)y(z)$. By (\ref{eqe6}),
it is easy to prove that the monodromy representation of GLE $y^{\prime \prime
}=I_{0}(z)y(z)$ has the same $(\varepsilon_{1},\varepsilon_{2})$ as that of
GLE (\ref{GLE-3})-(\ref{GLE-30}) and the monodromy data is $-\bar{C}_{j}$.

If $A_{j}\in \mathbb{R}$, i.e. $\bar{A}_{j}=A_{j}$, then $I_{0}(z)=I(z)$,
namely GLE $y^{\prime \prime}=I_{0}(z)y(z)$ coincides with GLE (\ref{GLE-3}%
)-(\ref{GLE-30}). Consequently, the monodromy data $-\bar{C}_{j}=C_{j}$, i.e.
$C_{j}\in i\mathbb{R}\cup \{ \infty \}$.

Conversely, if $C_{j}\in i\mathbb{R}\cup \{ \infty \}$, we have $-\bar{C}%
_{j}=C_{j}$, namely GLE $y^{\prime \prime}=I_{0}(z)y(z)$ has the same monodromy
group generators $\rho(\ell_{1})$, $\rho(\ell_{2})$ as GLE (\ref{GLE-3}%
)-(\ref{GLE-30}). Applying a uniqueness result of such GLE with respect to the
monodromy group generators $\rho(\ell_{1})$, $\rho(\ell_{2})$ (see
\cite{CKL4}), we conclude that $I_{0}(z)=I(z)$, which gives $\bar{A}_{j}%
=A_{j}$, i.e. $A_{j}\in \mathbb{R}$.

(2) Since $p\in(0,\frac{\tau}{2})$, i.e. $p$ is purely imaginary, we have
$\bar{p}=-p$. Define%
\[
\tilde{I}_{0}(z):=\left[
\begin{array}
[c]{l}%
2\wp(z|\tau)+\frac{3}{4}(\wp(z+p|\tau)+\wp(z-p|\tau))\\
-\bar{A}_{j}(\zeta(z+p|\tau)-\zeta(z-p|\tau))+\bar{B}_{j}%
\end{array}
\right]  .
\]
Then $\overline{I(\bar{z})}=\tilde{I}_{0}(z)$. The same argument as (1)
implies that $C_{j}\in i\mathbb{R}\cup \{ \infty \}$ if and only if $\tilde
{I}_{0}(z)=I(z)$, i.e. $-\bar{A}_{j}=A_{j}$, which is just $A_{j}\in
i\mathbb{R}$.
\end{proof}
\end{lemma}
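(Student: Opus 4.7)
The plan is to exploit the complex-conjugation symmetry of GLE~(\ref{89-1}) when $\tau\in i\mathbb{R}_{>0}$. The key ingredient is that for purely imaginary $\tau$, the Weierstrass functions satisfy $\overline{\wp(\bar z|\tau)}=\wp(z|\tau)$ and $\overline{\zeta(\bar z|\tau)}=\zeta(z|\tau)$ (indeed, the lattice $\Lambda_\tau$ is invariant under $w\mapsto -\bar w$, and the series defining $\wp,\zeta$ respects this). The main step is to see how conjugation acts on the potential $I(z)$ in (\ref{GLE-30}) and hence on the monodromy data.

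For part~(1), $p\in(0,\tfrac12)$ is real, so $\bar p=p$. Writing
\[
I_0(z)=2\wp(z|\tau)+\tfrac34(\wp(z+p|\tau)+\wp(z-p|\tau))+\bar A_j(\zeta(z+p|\tau)-\zeta(z-p|\tau))+\bar B_j,
\]
the conjugation-reflection identity gives $\overline{I(\bar z)}=I_0(z)$. Therefore, if $y(z)$ solves (\ref{GLE-3}), then $\tilde y(z):=\overline{y(\bar z)}$ solves $y''=I_0(z)y$. I would next check how the monodromy generators $\rho(\ell_1),\rho(\ell_2)$ transform under this conjugation-reflection: since reflection reverses the orientation of the fundamental cycles $\ell_1,\ell_2$, the generators are replaced by their inverses, and complex conjugation of matrix entries turns $C_j$ into $\bar C_j$; the two effects combine to send the monodromy data of the $I_0$-equation to $-\bar C_j$ while preserving $(\varepsilon_1,\varepsilon_2)$. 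Then the direction $A_j\in\mathbb{R}\Rightarrow C_j\in i\mathbb{R}\cup\{\infty\}$ is immediate because $I_0=I$ forces $-\bar C_j=C_j$. For the converse, from $-\bar C_j=C_j$ the two GLEs $y''=Iy$ and $y''=I_0y$ share the same monodromy representation (with the same $(\varepsilon_1,\varepsilon_2)$), and I would invoke the uniqueness statement (cf.~\cite{CKL4}) that such a GLE is determined by its monodromy generators to conclude $I_0\equiv I$, i.e.\ $\bar A_j=A_j$.

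For part~(2), $p\in(0,\tfrac{\tau}{2})$ is purely imaginary, so $\bar p=-p$. The same conjugation-reflection now additionally exchanges $\wp(z\pm p|\tau)$ (harmless) but flips the sign in front of the $\zeta(z+p|\tau)-\zeta(z-p|\tau)$ term. Thus $\overline{I(\bar z)}=\tilde I_0(z)$, where $\tilde I_0$ has $-\bar A_j$ in place of $A_j$. Arguing exactly as in part~(1), $C_j\in i\mathbb{R}\cup\{\infty\}$ becomes equivalent to $\tilde I_0\equiv I$, i.e.\ $-\bar A_j=A_j$, which is precisely $A_j\in i\mathbb{R}$.

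The main obstacle I anticipate is getting the sign in the monodromy-data transformation right, i.e.\ establishing cleanly that the conjugation-reflection operation sends $C_j$ to $-\bar C_j$ (not $+\bar C_j$) while leaving $(\varepsilon_1,\varepsilon_2)$ fixed; this requires carefully tracking how the reflection $z\mapsto\bar z$ reverses $\ell_1,\ell_2$ and interacts with the normal form (\ref{Mono-2}). Once that is in hand, the uniqueness theorem for GLEs with prescribed monodromy from \cite{CKL4} does the rest, and both parts follow by the same template with the single modification that conjugation exchanges $\pm p$ in case~(2).
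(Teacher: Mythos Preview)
Your proposal is correct and follows essentially the same approach as the paper's proof: define the conjugated potential $I_0$ (resp.\ $\tilde I_0$), use $\tilde y(z)=\overline{y(\bar z)}$ to relate the two GLEs, read off that the monodromy data transforms as $C_j\mapsto -\bar C_j$ with $(\varepsilon_1,\varepsilon_2)$ unchanged, and invoke the uniqueness result from \cite{CKL4} for the converse. One small correction to your heuristic for the sign: under $z\mapsto\bar z$ with $\tau\in i\mathbb{R}_{>0}$, the real cycle $\ell_1$ keeps its orientation while only $\ell_2$ is reversed, so $\tilde\rho(\ell_1)=\overline{\rho(\ell_1)}$ and $\tilde\rho(\ell_2)=\overline{\rho(\ell_2)}^{-1}$; plugging in the normal form (\ref{Mono-2}) then gives exactly $-\bar C_j$ with the same $(\varepsilon_1,\varepsilon_2)$, confirming the outcome you stated.
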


Next, we need to prove the following result.

\begin{theorem}
\label{THM-13}Let $\tau \in i\mathbb{R}_{>0}$ and $p\in(0,\frac{1}{2}%
]\cup(0,\frac{\tau}{2}]$. Then the three roots $C$'s of equation%
\begin{equation}
\wp(p|\tau)=\frac{-4(C\eta_{1}-\eta_{2})^{3}-g_{2}(C\eta_{1}-\eta_{2}%
)(C-\tau)^{2}+2g_{3}(C-\tau)^{3}}{(C-\tau)[12(C\eta_{1}-\eta_{2})^{2}%
-g_{2}(C-\tau)^{2}]} \label{i-68}%
\end{equation}
are distinct and all belong to $i\mathbb{R}\cup \{ \infty \}$.
\end{theorem}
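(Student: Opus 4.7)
The plan is to reduce (\ref{i-68}) to a depressed cubic whose real-root property follows from an application of AM--GM. First I would introduce $X = C\eta_{1} - \eta_{2}$ and $Y = C - \tau$ and clear denominators in (\ref{i-68}). A quick check using the Legendre relation shows that $C = \tau$ is not a root, so dividing by $Y^{3}$ and setting $x = X/Y$ yields the cubic
\begin{equation*}
4x^{3} + 12\wp(p)\,x^{2} + g_{2}\,x - \bigl(\wp(p)\,g_{2} + 2g_{3}\bigr) = 0.
\end{equation*}
A standard shift $x = t - \wp(p)$ removes the $x^{2}$-term, and after applying $\wp''(p) = 6\wp(p)^{2} - g_{2}/2$ and $\wp'(p)^{2} = 4\wp(p)^{3} - g_{2}\wp(p) - g_{3}$, the cubic becomes the clean depressed form
\begin{equation*}
2t^{3} - \wp''(p)\,t + \wp'(p)^{2} = 0.
\end{equation*}

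For $\tau \in i\mathbb{R}_{>0}$ and $p \in (0, 1/2] \cup (0, \tau/2]$, the quantities $\wp(p)$, $\wp'(p)^{2}$, and $\wp''(p)$ are all real, so this depressed cubic has real coefficients, and its three roots are distinct real numbers iff its discriminant $2\wp''(p)^{3} - 27\wp'(p)^{4}$ is strictly positive. Using the factorizations
\begin{equation*}
\wp''(p) = 2\sum_{i<j}(\wp(p) - e_{i})(\wp(p) - e_{j}), \qquad \wp'(p)^{2} = 4\prod_{k=1}^{3}(\wp(p) - e_{k}),
\end{equation*}
this positivity reduces to the symmetric inequality $(ab + ac + bc)^{3} > 27(abc)^{2}$ for $a = \wp(p) - e_{1}$, $b = \wp(p) - e_{2}$, $c = \wp(p) - e_{3}$. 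For $p \in (0, 1/2)$ one has $\wp(p) > e_{1}$, so $a, b, c > 0$; for $p \in (0, \tau/2)$ one has $\wp(p) < e_{2}$, so $a, b, c < 0$; in either case the three products $ab,\ ac,\ bc$ are strictly positive. AM--GM then gives $(ab + ac + bc)^{3} \geq 27(ab)(ac)(bc) = 27(abc)^{2}$, with strict inequality because $e_{1}, e_{2}, e_{3}$ are pairwise distinct. The boundary cases $p = 1/2$ and $p = \tau/2$ require a brief separate check: there $\wp'(p) = 0$, so the depressed cubic becomes $t(2t^{2} - \wp''(p)) = 0$, and one verifies directly $\wp''(1/2) = 2(e_{1} - e_{2})(e_{1} - e_{3}) > 0$ and $\wp''(\tau/2) = 2(e_{2} - e_{1})(e_{2} - e_{3}) > 0$, again producing three distinct real roots.

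Finally I would translate the result back to the variable $C$. Solving $x = (C\eta_{1} - \eta_{2})/(C - \tau)$ for $C$ yields the M\"obius transformation $C = (x\tau - \eta_{2})/(x - \eta_{1})$. For $\tau \in i\mathbb{R}_{>0}$ one has $\eta_{1} \in \mathbb{R}$ while $\eta_{2}, \tau \in i\mathbb{R}$, so this map sends $\mathbb{R} \cup \{\infty\}$ bijectively onto $i\mathbb{R} \cup \{\infty\}$. Hence three distinct real values of $x = t - \wp(p)$ produce three distinct values of $C$ in $i\mathbb{R} \cup \{\infty\}$, completing the proof. The only nontrivial step is the discriminant computation: the algebraic manipulations leading to the depressed cubic are routine, but one must recognize the discriminant as the symmetric expression in $\wp(p) - e_{k}$ above and verify that those three numbers share a nonzero sign in each of the two regimes for $p$, which is what makes the AM--GM inequality applicable.
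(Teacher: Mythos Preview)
Your argument is correct and substantially cleaner than the paper's. Both proofs begin by passing to the cubic in $x=(C\eta_1-\eta_2)/(C-\tau)$ and both finish by the same M\"obius translation back to $C$, but the middle step differs. The paper leaves the cubic in the form $X^{3}-3\wp(p)X^{2}+\tfrac{g_{2}}{4}X+\tfrac{2g_{3}+\wp(p)g_{2}}{4}=0$, writes its discriminant as an explicit quartic $\varphi(\wp(p);\tau)$ in $\wp(p)$, proves $\varphi>0$ directly only in favorable regimes (e.g.\ $\tau=ib$ with $b\geq 1$ for $p\in(0,\tfrac12]$), and then runs a continuity/infimum argument to rule out the discriminant ever touching zero. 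Your shift $x=t-\wp(p)$ collapses the cubic to $2t^{3}-\wp''(p)\,t+\wp'(p)^{2}=0$; the discriminant condition then becomes exactly $(ab+ac+bc)^{3}>27(abc)^{2}$ for $a,b,c=\wp(p)-e_{k}$, which falls to AM--GM once you observe that $a,b,c$ share a sign on each of the two $p$-intervals. This bypasses the case analysis and the limiting argument entirely, and the boundary check at $p\in\{\tfrac12,\tfrac{\tau}{2}\}$ is transparent. The trade-off is that the paper's route exposes the explicit polynomial $\varphi(x;\tau)$, which may be of independent interest, whereas yours hides it behind the depressed form; for the purpose of proving Theorem~\ref{THM-13} your approach is shorter and more conceptual.
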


Before we go to prove Theorem \ref{THM-13}, we are in the position to prove
Theorem \ref{THM3}.

\begin{proof}
[Proof of Theorem \ref{THM3}]Fix $\tau \in i\mathbb{R}_{>0}$ and $p\in
(0,\frac{1}{2})\cup(0,\frac{\tau}{2})$. It is well known that%
\[
e_{1},e_{2},e_{3},g_{2},\eta_{1},\wp(p)\in \mathbb{R}\text{ \ and \ }\eta
_{2}\in i\mathbb{R}\text{.}%
\]
Then it follows from equation (\ref{III-18}) that $C_{k}\in i\mathbb{R}\cup \{
\infty \}$ for all $k\in \{1,2,3\}$. Together with the argument at the end of
Section 4, it follows from Lemma \ref{lemm} and Theorem \ref{THM-13} that
$A_{j}\in \mathbb{R}$ for $p\in(0,\frac{1}{2})$ (resp. $A_{j}\in i\mathbb{R}$
for $p\in(0,\frac{\tau}{2})$) and are all distinct for $j\in \{1,2,3,4,5,6\}$.
Therefore, $\mathcal{Q}(A;p,\tau)=0$ has six distinct real roots for
$p\in(0,\frac{1}{2})$ and six distinct purely imaginary roots for
$p\in(0,\frac{\tau}{2})$. Finally, by denoting $y_{j}=A_{j}\wp^{\prime}%
(p|\tau)$, we conclude from $\wp^{\prime}(p|\tau)\in \mathbb{R}$ for
$p\in(0,\frac{1}{2})$ and $\wp^{\prime}(p|\tau)\in i\mathbb{R}$ for
$p\in(0,\frac{\tau}{2})$ that such $y_{j}$'s are real and give all the roots
of $\hat{\ell}_{1}(y;x,\tau)=0$, so $\hat{\ell}_{1}(y;x,\tau)=0$ has six
distinct real roots. The proof is complete.
\end{proof}

We will exploit a conceptual idea to prove Theorem \ref{THM-13}. Remark that
$C=\tau$ can not be a root of equation (\ref{i-68}) because $\wp(p|\tau
)\not =\infty$. Denote
\[
X=\frac{C\eta_{1}(\tau)-\eta_{2}(\tau)}{\tau-C}\text{ \ and }x=\wp(p|\tau).
\]
Then equation (\ref{i-68}) is equivalent to%
\begin{equation}
X^{3}-3xX^{2}+\frac{g_{2}}{4}X+\frac{2g_{3}+xg_{2}}{4}=0. \label{i-69}%
\end{equation}
Since $\tau \in i\mathbb{R}_{>0}$, it is well known that
\[
g_{2}(\tau)>0,\text{ }g_{3}(\tau)\in \mathbb{R},\text{ }x=\wp(p|\tau)\left \{
\begin{array}
[c]{c}%
\geq e_{1}(\tau)>0\text{ \ if }p\in(0,\frac{1}{2}],\\
\leq e_{2}(\tau)<0\text{ \ if }p\in(0,\frac{\tau}{2}].
\end{array}
\right.
\]
On the other hand, a straightforward computation implies that the discriminant
of equation (\ref{i-69}) is%
\[
\Delta=\varphi(x;\tau)/16,
\]
where%
\[
\varphi(x;\tau):=432g_{2}x^{4}+864g_{3}x^{3}-72g_{2}^{2}x^{2}-216g_{2}%
g_{3}x-g_{2}^{3}-108g_{3}^{2}.
\]

\begin{lemma}
\label{lemma-16}Let $\tau \in i\mathbb{R}_{>0}$ and $p\in(0,\frac{1}{2}%
]\cup(0,\frac{\tau}{2}]$. Then equation (\ref{i-69}) has three real distinct
roots $X$'s provided that one of the following conditions hold:

\begin{itemize}
\item[(1)] $p\in(0,\frac{1}{2}]$ and $\tau=ib$ with $b\geq1$.

\item[(2)] for any fixed $\tau \in i\mathbb{R}_{>0}$, either $p>0$ is
sufficiently small or $\frac{1}{2}-p\geq0$ is sufficiently small.

\item[(3)] $p\in(0,\frac{\tau}{2}]$ and $\tau=ib$ with $b\in(0,1]$.

\item[(4)] for any fixed $\tau \in i\mathbb{R}_{>0}$, $p\in(0,\frac{\tau}{2}]$
satisfies either $|p|$ or $|\frac{\tau}{2}-p|$ is sufficiently small.
\end{itemize}
\end{lemma}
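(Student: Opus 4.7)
The plan is to reduce everything to positivity of the discriminant $\varphi(x;\tau)$ of the cubic (\ref{i-69}), which admits a clean factorization susceptible to Taylor analysis at the branch points $e_{1},e_{2}$. Writing $P(x):=4x^{3}-g_{2}x-g_{3}$, so that $P'(x)=12x^{2}-g_{2}$ and $P(e_{i})=0$ for $i=1,2,3$, equation (\ref{i-69}) has three real distinct roots $X$ if and only if $\varphi(x;\tau)>0$, so the task reduces to proving this inequality under each of the four hypotheses.

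The first step, verified by direct expansion, is the key identity
\[
\varphi(x;\tau) \;=\; \bigl(P'(x)\bigr)^{3} - 108\,P(x)^{2}.
\]
Substituting $x=e_{i}$ and using $P(e_{i})=0$ gives $\varphi(e_{i})=(12e_{i}^{2}-g_{2})^{3}$, while the elementary-symmetric identities $e_{1}+e_{2}+e_{3}=0$ and $g_{2}=4(e_{i}^{2}-e_{j}e_{k})$ yield the compact formula $12e_{i}^{2}-g_{2}=4(e_{i}-e_{j})(e_{i}-e_{k})$ for $\{i,j,k\}=\{1,2,3\}$. Since $e_{1}>e_{3}>e_{2}$ on $i\mathbb{R}_{>0}$, this immediately gives $\varphi(e_{1})>0$ and $\varphi(e_{2})>0$.

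Cases (2) and (4) then follow from continuity alone: for $p$ near $0$, $x=\wp(p)\to\infty$ and $\varphi(x)\to+\infty$ because the leading coefficient $432g_{2}$ is positive; for $p$ near $\tfrac{1}{2}$ (resp.\ $\tfrac{\tau}{2}$), $x\to e_{1}$ (resp.\ $x\to e_{2}$) and $\varphi(x)$ converges to the positive quantity $\varphi(e_{1})$ (resp.\ $\varphi(e_{2})$). For cases (1) and (3) one must rule out zeros of $\varphi$ on the whole rays $[e_{1},\infty)$ and $(-\infty,e_{2}]$; this is accomplished by Taylor expansion. Using $\varphi=(P')^{3}-108P^{2}$ and $P(e_{1})=0$, successive differentiation yields
\[
\varphi'(e_{1}) = 72\,e_{1}(12e_{1}^{2}-g_{2})^{2},\qquad \varphi''(e_{1}) = 144(12e_{1}^{2}-g_{2})(12e_{1}^{2}+g_{2}),
\]
\[
\varphi'''(e_{1}) = 5184\,e_{1}(4e_{1}^{2}+g_{2}),\qquad \tfrac{1}{24}\varphi^{(4)}(e_{1}) = 432\,g_{2},
\]
each strictly positive because $e_{1}>0$, $g_{2}>0$, and $12e_{1}^{2}\pm g_{2}>0$. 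So every coefficient in $\varphi(e_{1}+u)=\sum_{k}\varphi^{(k)}(e_{1})u^{k}/k!$ is positive, and $\varphi(x)>0$ throughout $[e_{1},\infty)$, handling case (1). The parallel computation at $e_{2}$ gives alternating signs $\varphi'(e_{2})<0$, $\varphi''(e_{2})>0$, $\varphi'''(e_{2})<0$, $\varphi^{(4)}(e_{2})>0$ coming from $e_{2}<0$; with $v:=x-e_{2}\leq 0$, the signs of $v^{k}$ cancel those of $\varphi^{(k)}(e_{2})$ term by term, so again each contribution is non-negative and $\varphi(x)\geq\varphi(e_{2})>0$ throughout $(-\infty,e_{2}]$.

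The main obstacle is algebraic rather than analytic: spotting the factorization $\varphi=(P')^{3}-108P^{2}$ and the compact identity $12e_{i}^{2}-g_{2}=4(e_{i}-e_{j})(e_{i}-e_{k})$. Once both are in hand, positivity of all Taylor coefficients follows automatically from the ordering $e_{1}>e_{3}>e_{2}$ and the positivity of $g_{2}$ on $i\mathbb{R}_{>0}$, and the conditions $b\geq 1$ in (1) and $b\leq 1$ in (3) in fact play no role in the positivity analysis.
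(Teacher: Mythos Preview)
Your proof is correct, and it takes a genuinely different route from the paper. The paper argues case~(1) by exploiting the extra hypothesis $b\geq 1$, which forces $e_{3}\leq 0$ and hence $g_{3}\geq 0$ and $g_{2}\leq 4x^{2}$; these sign constraints let the authors bound $\varphi$ below by $8g_{2}^{3}-108g_{3}^{2}>0$ directly. For case~(2) they evaluate $\varphi(e_{1})$ by hand (obtaining $64(2e_{1}^{2}+e_{2}e_{3})^{3}$, which agrees with your formula) and invoke continuity. Cases~(3)--(4) are then claimed by the $e_{1}\leftrightarrow e_{2}$ symmetry. Because their argument for~(1) genuinely needs $b\geq 1$, the paper must follow this lemma with a separate deformation argument (Lemma~\ref{lemma-17}) to cover the remaining range of~$b$.

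Your approach sidesteps all of this. The factorization $\varphi=(P')^{3}-108P^{2}$ immediately gives $\varphi(e_{i})=(12e_{i}^{2}-g_{2})^{3}=64\bigl((e_{i}-e_{j})(e_{i}-e_{k})\bigr)^{3}$, and the Taylor expansion at $e_{1}$ (resp.\ $e_{2}$) has every term non-negative for $x\geq e_{1}$ (resp.\ $x\leq e_{2}$), using only $e_{1}>0>e_{2}$ and $g_{2}>0$. You are right that the restrictions $b\geq 1$ and $b\leq 1$ play no role: your argument actually proves $\varphi(x;\tau)>0$ on the full rays $[e_{1},\infty)$ and $(-\infty,e_{2}]$ for \emph{every} $\tau\in i\mathbb{R}_{>0}$, which is precisely the content of the paper's subsequent Lemma~\ref{lemma-17}. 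So your method not only proves the present lemma but renders the paper's continuity/deformation step superfluous.
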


\begin{proof}
It is known that equation (\ref{i-69}) has three real distinct roots if and
only if the discriminant $\Delta>0$, i.e. $\varphi(x;\tau)>0$.

(1)-(2). Recall $p\in(0,\frac{1}{2}]$ gives $x\geq e_{1}$. Since $\tau=ib$
with $b\geq1$, so $e_{1}>0\geq e_{3}>e_{2}$, i.e. $g_{3}=4e_{1}e_{2}e_{3}%
\geq0$ and%
\[
0<g_{2}=4(e_{1}^{2}-e_{2}e_{3})\leq4e_{1}^{2}\leq4x^{2}.
\]
Consequently,
\begin{align*}
\varphi(x;\tau)  &  =72g_{2}x^{2}(6x^{2}-g_{2})+216g_{3}x(4x^{2}-g_{2}%
)-g_{2}^{3}-108g_{3}^{2}\\
&  \geq72g_{2}\times \frac{g_{2}}{4}\times \frac{g_{2}}{2}-g_{2}^{3}%
-108g_{3}^{2}\\
&  =8g_{2}^{3}-108g_{3}^{2}>0,
\end{align*}
where $g_{2}^{3}-27g_{3}^{2}>0$ is used in the last inequality. This proves (1).

To prove (2), we fix any $\tau=ib$ with $b\in(0,1)$. Then $e_{1}>e_{3}%
>0>e_{2}$. Denote%
\[
\alpha=e_{1}^{2}\text{ and }\beta=-e_{2}e_{3}=(e_{1}+e_{3})e_{3}<2\alpha.
\]
Then $g_{2}=4(\alpha+\beta)$ and $e_{1}g_{3}=-4\alpha \beta<0$. Consequently, a
direct computation leads to%
\begin{align*}
\varphi(e_{1};\tau)  &  =432g_{2}e_{1}^{4}+864g_{3}e_{1}^{3}-72g_{2}^{2}%
e_{1}^{2}-216g_{2}g_{3}e_{1}-g_{2}^{3}-108g_{3}^{2}\\
&  =64(2\alpha-\beta)^{3}>0.
\end{align*}
Therefore, $\varphi(x;\tau)>0$ if $x-e_{1}(\tau)\geq0$ is sufficiently small,
namely provided that $\frac{1}{2}-p\geq0$ is sufficiently small. Finally, when
$p>0$ is sufficiently small, then $x=\wp(p|\tau)$ is sufficiently large, which
clearly implies $\varphi(x;\tau)>0$. This proves (2).

(3)-(4). Recall $p\in(0,\frac{\tau}{2}]$ gives $x\leq e_{2}<0$. Since
$xg_{3}\geq0$, $4x^{2}\geq g_{2}$ for $\tau=ib$ with $b\in(0,1]$ and
$-e_{1}e_{3}<2e_{2}^{2}$ for $\tau=ib$ with $b>1$, the proof is the same as
that of (1)-(2) by exchanging the roles of $e_{1}$ and $e_{2}$.
\end{proof}

\begin{lemma}
\label{lemma-17}Let $\tau \in i\mathbb{R}_{>0}$ and $p\in(0,\frac{1}{2}%
]\cup(0,\frac{\tau}{2}]$. Then equation (\ref{i-69}) has three real distinct
roots $X$'s.
\end{lemma}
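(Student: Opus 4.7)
The cubic \eqref{i-69} in $X$ has three distinct real roots if and only if its discriminant $\Delta = \varphi(\wp(p|\tau);\tau)/16$ is strictly positive. Since $p \in (0,1/2]$ forces $x := \wp(p|\tau) \in [e_1,+\infty)$ while $p \in (0,\tau/2]$ forces $x \in (-\infty,e_2]$, it is enough to show that
\[
\varphi(x;\tau) > 0 \quad \text{whenever } x \leq e_2(\tau) \text{ or } x \geq e_1(\tau),
\]
for every $\tau \in i\mathbb{R}_{>0}$. The plan is to analyze the quartic $\varphi(\cdot;\tau)$ directly: I will check that it is strictly positive at $e_1$, $e_2$, and at $\pm\infty$, and that all of its critical points lie inside the bounded interval $(e_2,e_1)$. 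Strict monotonicity on each of the two outer intervals $(-\infty,e_2]$ and $[e_1,+\infty)$ then forces $\varphi>0$ there.

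For the boundary values, using $4e_k^3 = g_2 e_k + g_3$ together with $g_2 = 4(e_k^2 - e_{k'}e_{k''})$ and $g_3 = 4 e_k e_{k'}e_{k''}$ (where $\{k,k',k''\} = \{1,2,3\}$), I expect the clean identity
\[
\varphi(e_k;\tau) = 64\bigl[(e_k-e_{k'})(e_k-e_{k''})\bigr]^3,
\]
which is consistent with the special case $\varphi(e_1;\tau) = 64(2\alpha-\beta)^3$ already recorded in the proof of Lemma~\ref{lemma-16}(2). Under the ordering $e_1>e_3>e_2$ valid on $i\mathbb{R}_{>0}$, this gives $\varphi(e_1;\tau)>0$ and $\varphi(e_2;\tau)>0$ (in the latter, two negative factors cube to a positive product), while $\varphi(e_3;\tau)<0$ — the last sign explains algebraically why two of the real roots of $\varphi(\cdot;\tau)$ must lie inside $(e_2,e_1)$.

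A direct differentiation should produce the factorization
\[
\varphi'(x) = 72\,(12x^2 - g_2)(2g_2 x + 3g_3),
\]
whose three real zeros are $\pm\sqrt{g_2/12}$ and $-3g_3/(2g_2)$. I expect each of these to lie strictly inside $(e_2,e_1)$: the first two reduce to $e_k^2 > g_2/12$ for $k=1,2$, equivalent to $2e_k^2 + e_{k'}e_{k''} > 0$, which again factors as $(e_k-e_{k'})(e_k-e_{k''})$ and is positive as in the previous paragraph. For $-3g_3/(2g_2)$, parallel identities of the form
\[
2g_2 e_1 + 3g_3 = 4e_1(e_1-e_2)(e_1-e_3), \qquad 3g_3 + 2g_2 e_2 = 4e_2(e_1-e_2)(e_3-e_2)
\]
show that both $e_1 + 3g_3/(2g_2)$ and $-3g_3/(2g_2) - e_2$ are positive. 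Granted this, $\varphi'$ has no zeros on $(-\infty,e_2] \cup [e_1,+\infty)$, hence $\varphi$ is strictly monotone on each of these intervals, and combined with $\varphi(\pm\infty)=+\infty$ and $\varphi(e_1), \varphi(e_2) > 0$ this forces $\varphi>0$ throughout. The only real obstacle is the symmetric-function bookkeeping needed to produce the four factorizations above; once they are in hand, every sign reduces transparently to the ordering $e_1>e_3>e_2$.
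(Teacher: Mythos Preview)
Your argument is correct and genuinely different from the paper's. The paper does \emph{not} attempt to show $\varphi(x;\tau)>0$ directly on the outer intervals; instead it runs a continuity argument in $\tau$. Writing $\tau=ib$, it sets $b_0=\inf\{b_1>0:\text{(\ref{i-69}) has three real distinct roots for all }b>b_1,\ p\in(0,\tfrac12]\}$, observes $b_0\le1$ from Lemma~\ref{lemma-16}, and derives a contradiction from $b_0>0$: at $b=b_0$ a multiple root $X_0$ would appear for some $p_0\in(0,\tfrac12)$, and a first-order deformation in $p$ forces $\partial H/\partial p(X_0;p_0)=0$, i.e.\ $12X_0^2-g_2=0$; combined with $H(X_0;p_0)=0$ this yields $g_2^3-27g_3^2=0$, impossible.

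Your route is the more elementary and self-contained one: the factorizations $\varphi'(x)=72(12x^2-g_2)(2g_2x+3g_3)$, $12e_k^2-g_2=4(e_k-e_{k'})(e_k-e_{k''})$, $2g_2e_k+3g_3=4e_k(e_k-e_{k'})(e_k-e_{k''})$, and $\varphi(e_k)=(12e_k^2-g_2)^3=64[(e_k-e_{k'})(e_k-e_{k''})]^3$ are all straightforward symmetric-function identities (indeed $\varphi(e_k)=(12e_k^2-g_2)^3$ follows immediately by eliminating $g_3$ via $4e_k^3=g_2e_k+g_3$). Once these are written down, every sign is read off from $e_1>e_3>e_2$ and the argument needs no appeal to Lemma~\ref{lemma-16} or to any limiting procedure. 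The paper's deformation argument, by contrast, avoids all of this algebra and pinpoints exactly why the result holds---namely that a degenerate root would force $g_2^3=27g_3^2$---which is conceptually cleaner and would adapt more readily if the explicit quartic were less tractable.
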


\begin{proof}
First we consider $p\in(0,\frac{1}{2}]$. Instead of proving $\varphi
(x;\tau)>0$ when $\tau=ib$ and $b\in(0,1)$ (which seems non-trivial because
$g_{3}<0$), here we exploit a conceptual proof. Define%
\[
b_{0}=\inf \left \{  b_{1}>0\left \vert
\begin{array}
[c]{c}%
\text{(\ref{i-69}) has three real distinct roots}\\
\text{for }\tau=ib\text{ with }b>b_{1}\text{ and }p\in(0,\frac{1}{2}]
\end{array}
\right.  \right \}  .
\]
Then $b_{0}\leq1$. We only need to prove $b_{0}=0$.

Suppose $b_{0}>0$. By the definition of $b_{0}$ and Lemma \ref{lemma-16}, we have

\begin{itemize}
\item[(i)] for any $p\in(0,\frac{1}{2}]$ and $\tau=ib_{0}$, equation
(\ref{i-69}) has three real roots $X$'s.

\item[(ii)] there exists $p_{0}\in(0,\frac{1}{2})$ such that equation
(\ref{i-69}) with $\tau=ib_{0}$ and $p=p_{0}$ has a multiple root $X_{0}$ with
mulitplicity $m\in \{2,3\}$.
\end{itemize}

Now we fix $\tau=ib_{0}$ and define%
\[
H(X;p):=X^{3}-3\wp(p|\tau_{0})X^{2}+\frac{g_{2}(\tau_{0})}{4}X+\frac
{2g_{3}(\tau_{0})+\wp(p|\tau_{0})g_{2}(\tau_{0})}{4}.
\]
Then $H(X_{0};p_{0})=0$, so%
\begin{align}
H(X;p)  &  =\frac{1}{m!}\frac{\partial^{m}H}{\partial X^{m}}(X_{0}%
;p_{0})(X-X_{0})^{m}\nonumber \\
&  +\frac{\partial H}{\partial p}(X_{0};p_{0})(p-p_{0})+\text{higher order
terms.} \label{i-70}%
\end{align}
If $\frac{\partial H}{\partial p}(X_{0};p_{0})\not =0$, then (\ref{i-70})
implies that there exists $p\in(0,1/2)$ satisfying $|p-p_{0}|>0$ sufficiently
small such that $H(X;p)=0$ has roots $X$'s in $\mathbb{C}\backslash \mathbb{R}%
$, a contradiction with (i). Thus $\frac{\partial H}{\partial p}(X_{0}%
;p_{0})=0$, i.e.%
\begin{equation}
12X_{0}^{2}-g_{2}(\tau_{0})=0. \label{i-71}%
\end{equation}
This together with $H(X_{0};p_{0})=0$ gives%
\begin{equation}
4X_{0}^{3}+g_{2}(\tau_{0})X_{0}+2g_{3}(\tau_{0})=0. \label{i-72}%
\end{equation}
However, (\ref{i-71})-(\ref{i-72}) leads to $g_{2}(\tau_{0})^{3}-27g_{3}%
(\tau_{0})^{2}=0$, a contradiction. Therefore, $b_{0}=0$, which completes the
proof of the case $p\in(0,\frac{1}{2}]$.

The case $p\in(0,\frac{\tau}{2}]$ can be proved in a similar way, and we omit
the details here.
\end{proof}

We conclude this section by proving Theorem \ref{THM-13}.

\begin{proof}
[Proof of Theorem \ref{THM-13}]Fix $\tau \in i\mathbb{R}_{>0}$. Then $\eta
_{1}(\tau)\in \mathbb{R}$ and $\eta_{2}(\tau)\in i\mathbb{R}$. So
$X=\frac{C\eta_{1}(\tau)-\eta_{2}(\tau)}{\tau-C}\in \mathbb{R}$ if and only if
$C\in i\mathbb{R\cup \{ \infty \}}\backslash \{ \tau \}$. Therefore, Theorem
\ref{THM-13} follows directly from Lemma \ref{lemma-17}.
\end{proof}

\end{document}